\DeclareMathOperator{\spn}{span}
\DeclareMathOperator{\supp}{supp}
\DeclareMathOperator{\cone}{cone}
\DeclareMathOperator{\cube}{Z}
\newcommand{\cS}{\mathcal{S}}
\newcommand{\bH}{\mathbf{H}}
\newcommand{\bt}{\mathbf{t}}
\newcommand{\R}{\mathbb{R}}
\newcommand{\N}{\mathbb{N}}
\newcommand{\beq}{\begin{equation}}
\newcommand{\eeq}{\end{equation}}
\newcommand{\inn}[2]{\left\langle #1, #2\right\rangle}
\newcommand{\scz}[1]{\scriptstyle{#1}}
\newcommand{\diff}[1]{\,\mathrm{d}#1}
\newcommand{\vb}{\, \bigr| \,}
\newcommand{\bx}{\mathbf{x}}
\newcommand{\by}{\mathbf{y}}
\newcommand{\be}{\mathbf{e}}
\newcommand{\bX}{\mathbf{X}}
\newcommand{\bXm}{\mathbf{X}^-}
\newcommand{\bE}{\mathbf{E}}
\newcommand{\bxd}{\tilde{\mathbf{x}}}
\newcommand{\bomega}{\boldsymbol{\omega}}
\newcommand{\balpha}{\boldsymbol{\alpha}}
\newcommand{\Cspline}{C(\bx \vb \bX)}
\newcommand{\Bspline}{B(\bx \vb \bX)}
\newcommand{\Cnspline}{C_\mathbf{n}(\bx \vb \bX)}
\newcommand{\CnsplineE}{C_\mathbf{n}(\bx \vb \bE)}
\newcommand{\Bnspline}{B_\mathbf{n}(\bx \vb \bX)}
\newcommand{\Bnsplinem}{B_\mathbf{n}(\bx \vb \bX^-)}
\newcommand{\Calpha}{C_{\boldsymbol{\alpha}}}
\newcommand{\Calphaspline}{C_{\boldsymbol{\alpha}}(\bx \vb \bX)}
\newcommand{\CalphasplineE}{C_{\boldsymbol{\alpha}}(\bx \vb \bE)}
\newcommand{\Balpha}{B_{\boldsymbol{\alpha}}}
\newcommand{\Balphasplinem}{B_{\boldsymbol{\alpha}}(\bx \vb \bXm)}
\newcommand{\Balphasplineb}{B_{\boldsymbol{\alpha}}(\bullet \vb \bXm)}
\newcommand{\norm}[1]{\lVert #1 \rVert}
\newcommand{\bmu}{{\boldsymbol{\mu}}}
\newcommand{\bk}{{\mathbf{k}}}
\newcommand{\bz}{{\mathbf{z}}}
\newcommand{\Z}{\mathbb{Z}}
\newcommand{\Q}{\mathbb{Q}}
\DeclareMathOperator{\re}{Re}
\DeclareMathOperator{\im}{Im}
\DeclareMathOperator{\sinc}{sinc}
\DeclareMathOperator{\card}{card}
\newcommand{\bm}{{\mathbf{m}}}
\newtheorem{conedefn}{Definition}[section]
\newtheorem{generalizedremark}{Remark}[section]
\newtheorem{boxdefn}[conedefn]{Definition}
\newtheorem{conesdirection}{Proposition}[section]
\newtheorem{iteratedintegral}{Lemma}[section]
\newtheorem{cone3direction}[conesdirection]{Proposition}
\newtheorem{transformationremark}{Remark}[section]
\newtheorem{condatremark}[transformationremark]{Remark}
\newtheorem{generalcoeffs}{Proposition}[section]
\newtheorem{fracconedefn}{Definition}[section]
\newtheorem{fraccones}{Proposition}[section]
\newtheorem{fracconerecurrence}[fraccones]{Proposition}
\newtheorem{propfraccone3}[fraccones]{Proposition}
\newtheorem{reductionremark}{Remark}[section]
\newtheorem{frachexdefn}[fracconedefn]{Definition}
\newtheorem{propfrachex3}[fraccones]{Proposition}
\newtheorem{differenceremark}[reductionremark]{Remark}
\newtheorem{rieszbasis}{Theorem}[section]
\newtheorem{complexconedefn}{Definition}[section]
\newtheorem{complexhexdefn}[complexconedefn]{Definition}
\title{Fractional Cone Splines and Hex Splines}
\author{Peter R. Massopust and Patrick J. Van Fleet}
\begin{document}

\maketitle

\bibliographystyle{plain}
\begin{abstract}
    We introduce an extension of cone splines and box splines to fractional and complex orders. These new families of multivariate splines are defined in the Fourier domain along certain $s$-dimensional meshes and include as special cases the three-directional box splines \cite{article:condat} and hex splines \cite{article:vandeville} previously considered by Condat, Van De Ville et al. These cone and hex splines of fractional and complex order generalize the univariate fractional and complex B-splines defined in \cite{article:ub,article:fbu} and investigated in, e.g., \cite{article:fm,article:mf}. Explicit time domain representations are derived for these splines on $3$-directional meshes.  We present some properties of these two multivariate spline families such as recurrence, decay and refinement.  Finally it is shown that a bivariate hex spline and its integer lattice translates form a Riesz basis of its linear span.
\vskip 6pt
\noindent\textbf{Keywords and Phrases:} Cone splines, box splines, $s$-dimensional mesh, hex splines, (fractional) difference operator, fractional and complex B-splines
\vskip 6pt\noindent
\textbf{AMS Subject Classification (2010):} 65D07, 41A15, 42A38
\end{abstract}
\section{Introduction}
We introduce an extension of cone and box splines to fractional and complex orders. Given a direction matrix or knot set, both types of multivariate splines are defined in the Fourier domain as ridges along the directions. These new families include as special cases the three-directional box splines introduced by Condat and Van De Ville \cite{article:condat} as well as the hex splines defined by Van De Ville et al. \cite{article:vandeville}. Whereas the latter multivariate splines provide integer smoothness along a direction, the new families allow for fractional smoothness along different directions thus defining \textit{continuous} families (with respect to smoothness) of functions. In the case of complex orders, cone and hex splines have an additional phase factor automatically built in that allows -- just as in the one-dimensional setting -- the enhancement of frequencies along the directions defined by the knot set. The importance of phase has been well known since the experiment by Oppenheimer and Lim \cite{article:ol}. For this reason, hex splines of complex orders may be potentially employed for multivariate signal or image analysis.

This paper, however, focuses on the mathematical foundations of cone and hex splines of fractional and complex orders. In the current setting, these two spline families generalize the univariate concepts of fractional and complex B-splines as defined by Blu and Unser \cite{article:ub}, respectively, Forster et al. \cite{article:fbu} to higher dimensions. Previously, an extension of univariate complex B-splines to higher dimensions was proposed in \cite{article:mf}. This approach is based on a generalized version of the Hermite--Genocchi formula and produces the analog of multivariate simplex splines (of complex orders). Our approach differs also from that undertaken in \cite{article:fm2} where the multivariate extension was done via ridge functions. The focus of our setting lies more on the geometry of the knot set.

The structure of this paper is as follows. In Section \ref{section:review}, we briefly review the definitions of cone and box splines and present those properties that will be relevant in subsequent sections. The next section deals with cone splines on $3$-directional meshes. There, we extend the results obtained in \cite{article:condat} and obtain an explicit time domain representation of such cone splines on $3$-directional meshes with different weights along the various directions. Hex splines with different weights along the given directions are defined in Section \ref{section:conesplines} and it is shown that the hex splines in \cite{article:condat,article:vandeville} are again special cases. Fractional cone and hex splines are introduced in Section \ref{section:fractionalconeshexes}.  While they are defined in the frequency domain, explicit formulas for them in the time domain are presented.  We show that fractional cone splines in $\mathbb{R}^s$ constructed from $s$ knots obey a recurrence formula that is similar to that derived by Micchelli \cite{article:micchelli} for polynomial cone splines.  In the final section, we derive decay rates for hex splines and also show that hex splines are refinable functions.  We also show that with modest assumptions placed on the knots, the set $V$ consisting of a bivariate hex spline and its integer lattice translates form a Riesz basis for $\spn(V)$.  We conclude the paper by extending the definitions of fractional splines to introduce cone and hex splines of complex order.

\section{Review of Cone and Box Splines} \label{section:review}
The $B$-spline of Curry and Schoenberg \cite{article:curry} is defined as follows:
\begin{equation}
    N_{0,n}(x) := n\left[ x_0,\ldots, x_n\right](\cdot - x)_+^{n-1}      \label{Bspline}
\end{equation}
where $x_0 \leq \cdots \leq x_n$ with $x_0 < x_n$ are the knots, $x_+ = \max(0,x)$ is the truncated power function, and $\left[ x_0,\ldots, x_n\right]$ is the divided difference operator.  Multiplication by $n$ normalizes the spline so that $\int\limits_{\mathbb{R}} N_{0,n}(x)\diff{x} = 1$.
\subsection{Cone Splines}
It is natural to define multivariate splines by first generalizing the truncated power function that appears in \eqref{Bspline}.  We have the following definition:
\begin{conedefn}
\label{defn:cone}
Let $\bX = \left\{ \bx^1, \ldots, \bx^s\right\} \subset \mathbb{R}^s$ be a linearly independent set of vectors with $\cone(\bX) = \left\{ t_1\bx^1+\cdots +t_s\bx^s \vb t_1, \ldots, t_s \geq 0\right\}$ and $M = \left[ \bx^1 \cdots \bx^s\right]$.  Then the \emph{cone spline} $\Cspline$ is defined as
\begin{equation}
    \Cspline := \left\{ \begin{array}{ll} \lvert \det(M)\rvert^{-1}, & \bx \in \cone(\bX); \\ 0, & \mbox{otherwise},\end{array}\right.     \label{conedefstart}
\end{equation}
and inductively for additional nonzero knots by
\begin{equation*}
    C\left(\bx\vb \bX \cup \left\{ \bx^{s+1}\right\}\right) = \int_0^{\infty} C(\bx - t\bx^{s+1} \vb \bX) \diff{t}.
\end{equation*}
\end{conedefn}
Definition \ref{defn:cone} leads to several properties (see \cite{book:chui}) of cone splines.  For $\bX = \left\{ \bx^1,\ldots, \bx^n\right\} \subset \mathbb{R}^s\backslash\left\{ \mathbf{0}\right\}$, $n\geq s$, $\spn(\bX) = \mathbb{R}^s$.  The first four properties appear in \cite{book:chui} and the last can be found in \cite{article:dahmen}:
\begin{itemize}
    \item $\supp(\Cspline) = \cone(\bX) := \left\{ t_1\bx^1 + \cdots + t_n\bx^n \vb t_1,\ldots, t_n \geq 0\right\}$.
    \item $\Cspline > 0$ for $\bx \in \cone(\bX)$.
    \item $\Cspline$ is a piecewise polynomial function of total degree $n-s$.
    \item Let $p$ be the minimum number of knots that can be removed from $\bX$ so that the resulting set does not span $\mathbb{R}^s$.  Then
        \begin{equation}
            \Cspline \in C^{p-2}\left(\mathbb{R}^s\right).          \label{conecontinuity}
        \end{equation}
    \item For all continuous functions $g: \mathbb{R}^s \rightarrow \mathbb{R}$ with compact support,
        \begin{equation}
            \int_{\mathbb{R}^s} \Cspline g(\bx) \diff{\bx} = \int_{[0,\infty)^n} g\left(\sum_{k=1}^n t_k\bx^k\right) \diff{\mathbf{t}}.  \label{conedistribution}
        \end{equation}
\end{itemize}
As the set $C_0(\mathbb{R}^s)$ of continuous functions with compact support contains the set $C_0^\infty$ of infinitely differentiable functions as a dense (in $L^p$, $p\in [1,\infty)$) subset and the latter is dense in the Schwartz space $\mathcal{S}(\mathbb{R}^s)$, we can interpret the cone spline $C(\bullet \vb \bX)$ as a tempered distribution by setting
\begin{equation*}\label{eq5}
\inn{C(\bullet \vb \bX)}{g} := \int_{[0,\infty)^n} g\left(\sum_{k=1}^n t_k\bx^k\right) \diff{\mathbf{t}}, \qquad\forall\,g\in \cS(\R^s).
\end{equation*}

We can use \eqref{eq5} to see that the Fourier transformation of the cone spline, considered as an element of $\cS'(\R^s)$, is given by
\begin{equation}
    \widehat{C}(\bomega \vb \bX) = \prod_{k=1}^n \left(\frac{1}{i\langle \bomega, \bx^k\rangle} + \pi \delta\left( \langle \bomega, \bx^k\rangle\right)\right),     \label{coneft}
\end{equation}
where $\prod$ represents the direct or tensor product of distributions. Indeed, for any $g\in \cS(\R^s)$,
\begin{align*}
\inn{\widehat{C}(\bomega \vb \bX)}{g} & = \inn{\Cspline}{\widehat{g}} =  \int_{[0,\infty)^n} \widehat{g}\left(\sum_{k=1}^n t_k\bx^k\right) \diff{\mathbf{t}} = \inn{\bH}{\widehat{g}(\bX \bt)} = \inn{\widehat{\bH}(\bX^T\bomega)}{g}\\
& = \inn{\prod_{k=1}^n \widehat{H}(\inn{\bx^k}{\bomega})}{g} = \inn{\prod_{k=1}^n \left(\frac{1}{i\langle \bomega, \bx^k\rangle} + \pi \delta\left( \langle \bomega, \bx^k\rangle\right)\right)}{g},
\end{align*}
where we set $\bX \bt := \sum\limits_{k=1}^n t_k\bx^k$, and where $H$ and $\bH$ denotes the one-, respectively, $n$-dimensional Heaviside function.
\begin{generalizedremark}
In the sequel, we need to consider direct products of generalized functions $S$ and $T$ of the form $S^m$ and $S^m T^n$, $m,n\in \N$. These are defined as follows \cite{book:gelfand}.
\begin{equation*}
    \inn{S^m}{\phi(\bx)} := \inn{S_{x_1}}{\inn{S_{x_2}}{\cdots\langle S_{x_m}, \phi(x_1,\ldots,x_m)\rangle \cdots}},
\end{equation*}
where the subscript on $S$ indicates on which variable $S$ acts. Similarly, one defines
\begin{equation*}
    \inn{S^m T^n}{\phi (\bx,\by)} := \inn{S^m}{\inn{T^n}{\phi (\bx,\by}},
\end{equation*}
where $S^m$, respectively, $T^n$ acts on the $m$ variables $\bx = (x_1, \ldots, x_m)$, respectively, the $n$ variables $\by = (y_1, \ldots, y_n)$.
\end{generalizedremark}
The distributional relation \eqref{conedistribution} can also be used to show that if $A$ is any nonsingular $s\times s$ matrix, then
\begin{equation}
    C\left(\bx \vb \left\{ A\bx^1, \ldots, A\bx^n\right\}\right) = \frac{1}{\lvert\det(A)\rvert} C\left( A^{-1}\bx \vb \bX\right).       \label{conerotateknots}
\end{equation}
Finally, Micchelli \cite{article:micchelli} proved the following recurrence formula obeyed by cone splines:
\begin{equation}
    \Cspline = \frac{1}{n-s} \sum_{k=1}^n \lambda_k C\left(\bx \vb \bX\backslash\left\{ \bx^k\right\}\right)     \label{conerecurrence}
\end{equation}
where $\bx = \sum\limits_{k=1}^n \lambda_k \bx^k$.
Two cone splines are plotted in Figure \ref{fig:conesplines}.
\begin{figure}[ht]
    \begin{center}
        \resizebox{4in}{!}{\includegraphics{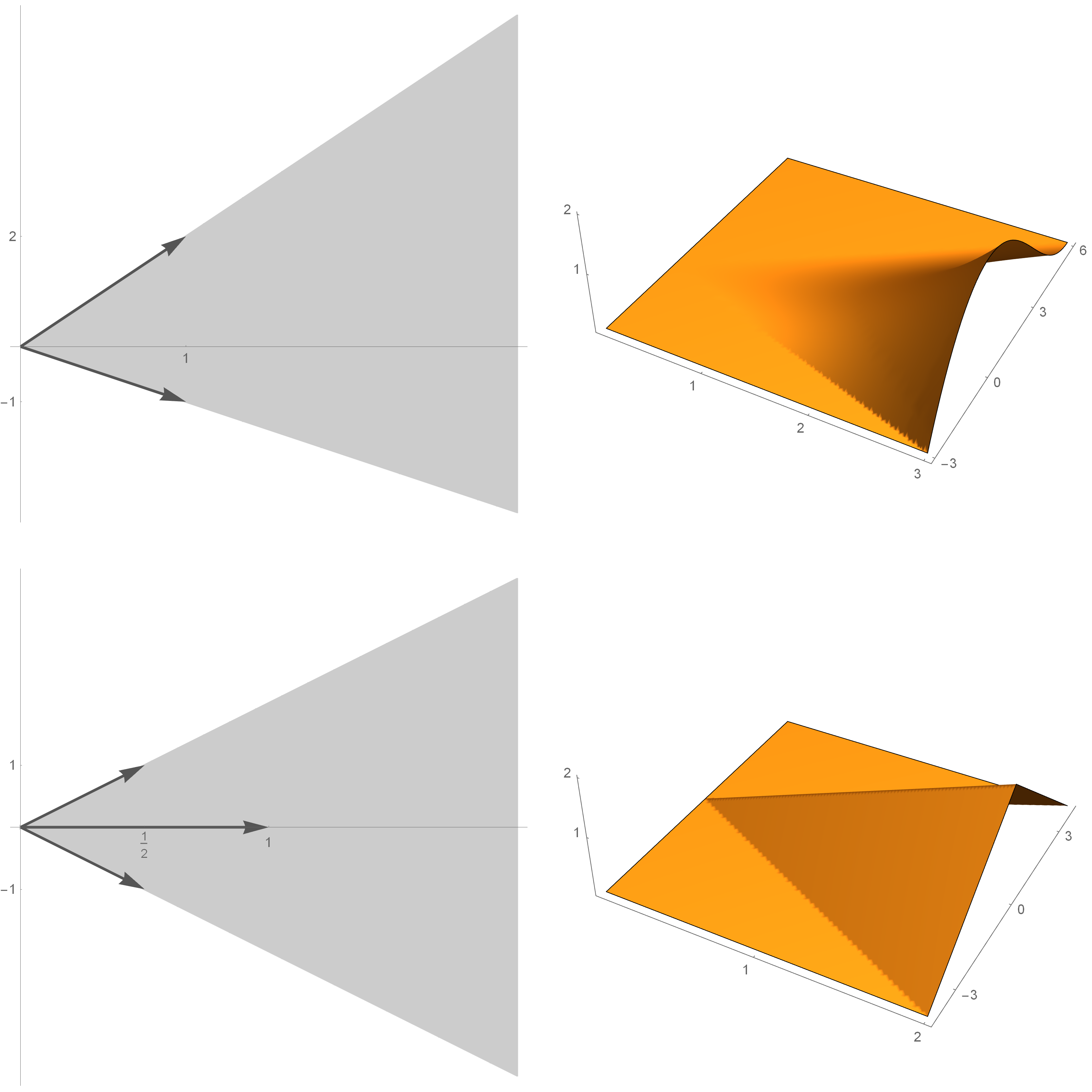}}
    \end{center}
    \caption{Two cone splines and their support.  Let $\bx^1 = (1,2)^T$ and $\bx^2=(1,-1)^T$.  Then the knot set is $\bX = \left\{ \bx^1,\bx^1,\bx^2,\bx^2,\bx^3\right\}$ and the top spline is a piecewise cubic polynomial.  The spline on the bottom is piecewise linear with knots $\bx^1 = \left(\frac{1}{2},1\right)^T$,  $\bx^2 = \left(\frac{1}{2},-1\right)^T$, and $\bx^3 = (1,0)^T$.}
    \label{fig:conesplines}
\end{figure}
\subsection{Box Splines}
Multivariate box splines are defined in a manner similar to that for cone splines:
\begin{boxdefn}
\label{defn:box}
Let $\bX = \left\{ \bx^1, \ldots, \bx^s\right\} \subset \mathbb{R}^s$ be a linearly independent set of vectors. Let $M = \left[ \bx^1 \cdots \bx^s\right]$ and denote by $Z(\bX)$ the zonotope $\left\{ t_1\bx^1+\cdots +t_s\bx^s \vb t_1, \ldots, t_s \in [0,1]\right\}$.
Then the \emph{box spline} $\Bspline$ is defined as
\begin{equation*}
    \Bspline := \left\{ \begin{array}{ll} \lvert \det(M)\rvert^{-1}, & \bx \in \cube(\bX); \\ 0, & \mbox{otherwise},\end{array}\right.
\end{equation*}
and inductively for additional nonzero knots by
\begin{equation*}
    B\left(\bx\vb \bX \cup \left\{ \bx^{s+1}\right\}\right) = \int_0^1 B(\bx - t\bx^{s+1} \vb \bX) \diff{t}.
\end{equation*}
\end{boxdefn}
Definition \eqref{defn:box} gives rise to several properties (see \cite{book:chui}) of box splines.  For $\bX = \left\{ \bx^1,\ldots, \bx^n\right\} \subset \mathbb{R}^s\backslash\left\{ \mathbf{0}\right\}$, $n\geq s$, $\spn(\bX) = \mathbb{R}^s$ (see \cite{book:chui}):
\begin{itemize}
    \item $\supp(\Bspline) = \cube(\bX) = \left\{ t_1\bx^1 + \cdots + t_n\bx^n \vb t_1,\ldots, t_n \in [0,1] \right\}$.
    \item $\Bspline > 0$ for $\bx \in \cube(\bX)$.
    \item $\Bspline$ is a piecewise polynomial function of total degree $n-s$.
    \item For all continuous functions $g: \mathbb{R}^s \rightarrow \mathbb{R}$ with compact support,
        \begin{equation}
            \int_{\mathbb{R}^s} \Bspline g(\bx) \diff{\bx} = \int_{[0,1]^n} g\left(\sum_{k=1}^n t_k\bx^k\right) \diff{\mathbf{t}}.  \label{boxdistribution}
        \end{equation}
\end{itemize}
As $B(\bullet \vb \bX)\in L^1(\R^s)$, we interpret $B(\bullet \vb \bX)$ as a regular tempered distribution acting on Schwartz functions via \eqref{boxdistribution}. Employing similar arguments to those given in the derivation of the Fourier transform of a cone spline, replacing the $n$-dimensional Heaviside function by the characteristic function of the unit $n$-cube, one shows that $\widehat{B}(\bullet \vb \bX)$ is given by
\begin{equation}
    \widehat{B}(\bomega \vb \bX) = \prod_{k=1}^n \frac{\left(1-e^{-i\langle \bomega, \bx^k\rangle}\right)}{i\langle \omega, \bx^k\rangle}.      \label{boxft}
\end{equation}
Writing the above equation in the form
\begin{align}\label{10}
\widehat{B}(\bomega \vb \bX) &= \prod_{k=1}^n \left(1-e^{-i\langle \bomega, \bx^k\rangle}\right) \cdot \prod_{k=1}^n \frac{1}{i\langle \omega, \bx^k\rangle} = \prod_{k=1}^n \left(1-e^{-i\langle \bomega, \bx^k\rangle}\right) \cdot \prod_{k=1}^n \left(\frac{1}{i\langle \omega, \bx^k\rangle} + \pi \delta\left( \langle \bomega, \bx^k\rangle\right)\right)\nonumber\\
& =: \widehat{\nabla}_\bX \widehat{C}(\bomega \vb \bX),
\end{align}
where we set
\[
\widehat{\nabla}_\bX := \prod_{k=1}^n \left(1-e^{-i\langle \bomega, \bx^k\rangle}\right)
\]
and used the fact that $\left(1-e^{-i\langle \bomega, \bx^k\rangle}\right)\cdot\delta\left( \langle \bomega, \bx^k\rangle\right) = 0$ (in $\cS'(\R^s)$). Taking the inverse Fourier transform of \eqref{10} and noting that $\widehat{\nabla}_\bX$ is the Fourier transform of the multivariate backwards difference operator $\frac{1}{\Gamma (n)}\,{\nabla_\bX}$ defined by
\[
\nabla_\bX := \prod_{\bx\in \bX} \nabla_\bx,
\]
where the backwards difference operator in the direction $\bx$ for a function $f:\R^s\to \R$ is given as
\[
\nabla_\bx f := f - f(\bullet - \bx),
\]
we obtain the well known identity
\beq\label{BC}
\Bspline = \frac{1}{\Gamma(n)}\, \nabla_\bX {C}(\bx \vb \bX).
\eeq
%
%
%
%
We end this section with examples of box splines plotted in Figure \ref{fig:boxsplines}.
\begin{figure}[ht]
    \begin{center}
        \resizebox{4in}{!}{\includegraphics{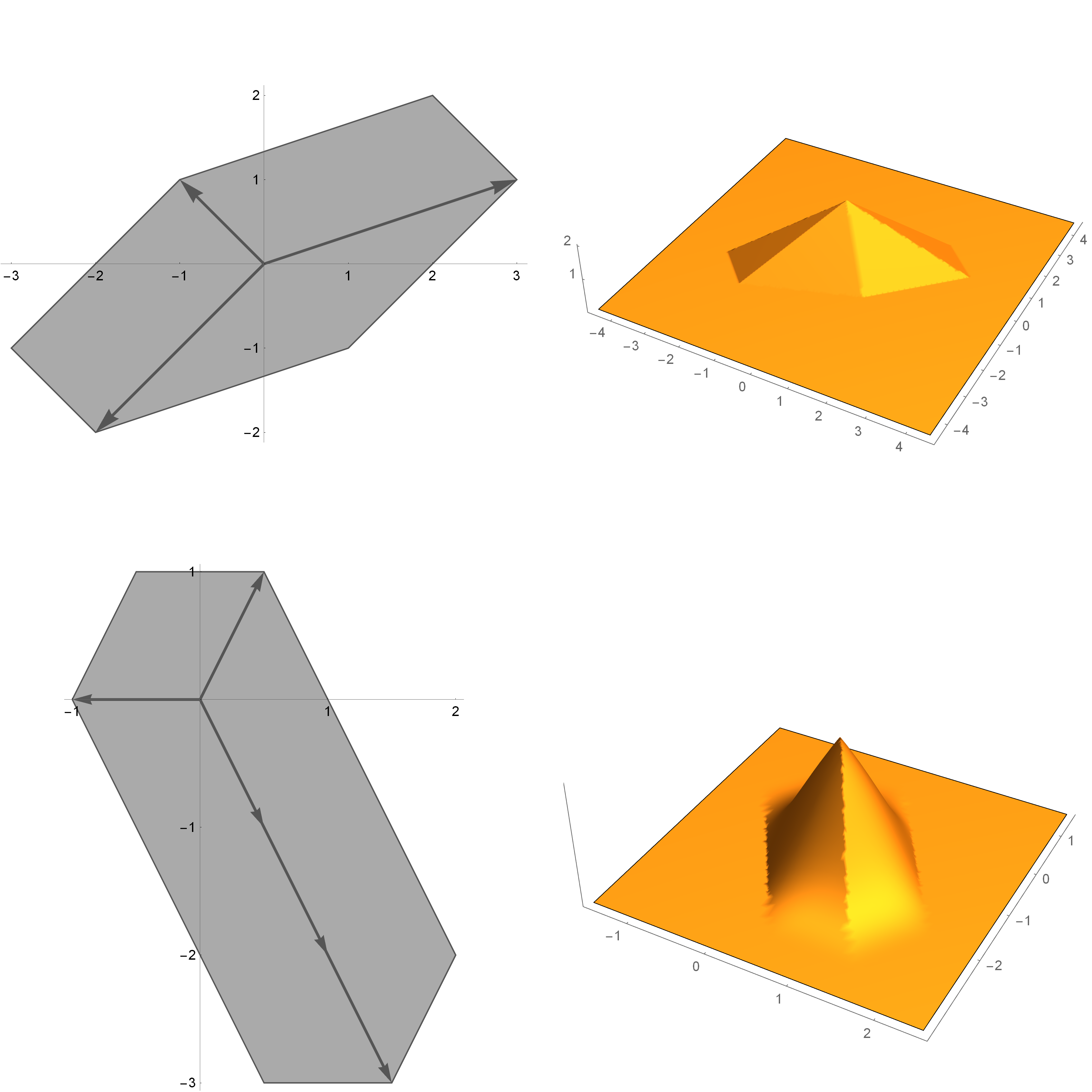}}
    \end{center}
    \caption{Two box splines and their support.  The top spline is a piecewise linear polynomial.  The knots are $\bx^1 = (3,1)^T$ and $\bx^2=(-2,-2)^T$ and $\bx^3=(-1,1)^T$.  The spline on the bottom is piecewise cubic.  If $\bx^1=\left(\frac{1}{2},1\right)^T$, $\bx^2=\left(\frac{1}{2},-1\right)^T$ and $\bx^3 = (-1,0)^T$, the knot set is $\bX = \left\{ \bx^1, \bx^2, \bx^2, \bx^2, \bx^3\right\}$.}
    \label{fig:boxsplines}
\end{figure}
\section{Cone Splines on $3$-Directional Meshes}
\label{section:conesplines}
A knot set $\bX = \left\{ \bx^1, \bx^2, \bx^3\right\} \subset \mathbb{R}^2$ will be called a \textit{$3$-directional mesh} if $\bx^1, \bx^2$ are linearly independent and $\bx^3 = \bx^1 + \bx^2$.

We are interested in developing explicit formulas for cone splines on $3$-directional meshes.  In particular, we seek formulas for cone splines of the form
\begin{equation*}
    \Cnspline := C_{\left(n_1,n_2,n_3\right)}\left(\bx \vb \bX\right):= C\left(\bx\; \Bigg\lvert \left\{ \underbrace{\strut \bx^1, \ldots \bx^1}_{n_1 \ \mbox{\scriptsize{times}}}, \underbrace{\bx^2, \ldots, \bx^2}_{n_2 \ \mbox{\scriptsize{times}}}, \underbrace{\bx^3, \ldots \bx^3}_{n_3 \ \mbox{\scriptsize{times}}} \right\}\right)
\end{equation*}
where $n_k \in \Z_+ := \{n\in \Z \vb n > 0\}$, $k=1,2,3$.

For the special case $\mathbf{n} = \left( n, n, n\right)$, $n \in \Z_+$ and knot set $\bX = \left\{ \bx^1, \bx^2, \bx^3\right\}$ where
\begin{equation*}
    \bx^1 = \left(\begin{array}{c} \frac{1}{2} \\ -\frac{\sqrt{3}}{2} \end{array}\right), \qquad \bx^2 = \left(\begin{array}{c} \frac{1}{2} \\ \frac{\sqrt{3}}{2} \end{array}\right), \qquad  \bx^3 = \bx^1 + \bx^2 = \left(\begin{array}{c} 1 \\ 0 \end{array}\right),
\end{equation*}
Condat and Van De Ville \cite{article:condat} were able to derive a closed formula for $\Cnspline$:
\begin{align}
    \Cnspline &= \frac{2}{\sqrt{3}}\sum_{k=0}^{n-1} \binom{n-1+k}{k} \frac{1}{(n-1-k)!(2n-1+k)!}  \nonumber \\
        &{}  \qquad\qquad \times \left(\frac{2\lvert x_2\rvert}{\sqrt{3}}\right)^{n-1-k} \left(x_1 - \frac{\lvert x_2\rvert}{\sqrt{3}}\right)_+^{2m-1+k}.         \label{condatcone}
\end{align}

It is straightforward to generalize \eqref{condatcone} for the case when the knots of $\bX$ are not repeated the same amount of times.  In order to derive such a formula for $\mathbf{n} = \left( n_1, n_2, n_3\right)$, we first need the following result for $s$-dimensional cone splines built from $s$ knots.
\begin{conesdirection}
                \label{prop:conesdirection}
Let $\bX = \left\{ \bx^1, \ldots, \bx^s \right\} \subset \mathbb{R}^s$ be a linearly independent set of vectors and suppose $\mathbf{n} = \left( n_1, \ldots, n_s\right) \in \mathbb{Z}^s_+$.  Then
\begin{equation}
    \Cnspline = \lvert \det(M)\rvert^{-1} \prod_{k=1}^s \frac{\langle \bx, \bxd^k\rangle_+^{n_k-1}}{\left(n_k-1\right)!}        \label{conespoints}
\end{equation}
where $M = \left[ \bx^1 \cdots \bx^s \right]$ and $\left\{ \bxd^1, \ldots, \bxd^s\right\}$ is the dual basis to $\bX$.
\end{conesdirection}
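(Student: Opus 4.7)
The plan is to reduce the identity to the case where $\bX$ equals the standard basis $\bE=\{\be^1,\ldots,\be^s\}$ of $\R^s$ (with the same multiplicities $n_1,\ldots,n_s$), in which case the cone spline factors into a tensor product of univariate truncated powers, and then to pull this formula back to general $\bX$ via the affine invariance \eqref{conerotateknots}.

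\textbf{Step 1 (Standard basis).} For $\bE$ with multiplicities, I would test $C_\mathbf{n}(\bx\vb\bE)$ against $g\in\cS(\R^s)$ using the distributional identity \eqref{conedistribution}: the argument $\sum_k t_k\be^{j(k)}$ decouples into $s$ independent per-coordinate sums, so Fubini lets me process each coordinate in turn. The classical identity
\[
\int_{[0,\infty)^n} h(t_1+\cdots+t_n)\,\mathrm{d}\mathbf{t} = \int_0^\infty h(u)\,\frac{u^{n-1}}{(n-1)!}\,\mathrm{d}u,
\]
(a short induction, or an $n$-fold integration by parts) applied coordinatewise then identifies $C_\mathbf{n}(\bx\vb\bE)=\prod_{k=1}^s (x_k)_+^{n_k-1}/(n_k-1)!$.

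\textbf{Step 2 (Transport to general $\bX$).} Set $A=M=[\bx^1\cdots\bx^s]$, so that $A\be^k=\bx^k$ for every $k$. Applying $A$ to each knot copy preserves multiplicities, hence \eqref{conerotateknots} gives
\[
C_\mathbf{n}(\bx\vb\bX)=\lvert\det(M)\rvert^{-1}\, C_\mathbf{n}(M^{-1}\bx\vb\bE).
\]
The $k$-th coordinate of $M^{-1}\bx$ is $\inn{\bx}{\bxd^k}$, since the rows of $M^{-1}$ are precisely the dual basis vectors characterized by $\inn{\bx^j}{\bxd^k}=\delta_{jk}$. Substituting into Step~1 produces \eqref{conespoints}.

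\textbf{Main obstacle.} The only subtlety I anticipate is verifying that \eqref{conerotateknots} continues to hold when knots appear with multiplicity; this reduces to a short check starting from the distributional definition \eqref{eq5}, using that $A$ is applied copywise. A purely inductive alternative sidesteps this entirely: the base case $\mathbf{n}=(1,\ldots,1)$ is immediate because $\bx\in\cone(\bX)$ if and only if $\inn{\bx}{\bxd^k}\ge 0$ for every $k$, while the inductive step invokes the defining recursion $C_{\mathbf{n}+\be^j}(\bx\vb\bX)=\int_0^\infty C_\mathbf{n}(\bx-t\bx^j\vb\bX)\,\mathrm{d}t$ together with $\inn{\bx-t\bx^j}{\bxd^k}=\inn{\bx}{\bxd^k}-t\,\delta_{jk}$ and the elementary identity $\int_0^\infty (y-t)_+^{n-1}/(n-1)!\,\mathrm{d}t = y_+^n/n!$.
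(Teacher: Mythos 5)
Your proposal is correct and follows essentially the same two-step strategy as the paper's proof: first identify $\CnsplineE$ as the tensor product $\prod_{k=1}^s (x_k)_+^{n_k-1}/(n_k-1)!$, then transport to general $\bX$ via \eqref{conerotateknots} with $A=M$, reading off $\inn{\bx}{\bxd^k}$ as the $k$-th coordinate of $M^{-1}\bx$. The only difference is that you derive the standard-basis case in the time domain from \eqref{conedistribution} and Fubini, whereas the paper obtains it by inverting the Fourier transform \eqref{coneft} as an $n_k$-fold convolution of the distributions $(x_k)_+^{0}$; both routes are valid, and your worry about \eqref{conerotateknots} holding with multiplicities is harmless since that identity is stated for arbitrary knot multisets and follows from the distributional definition by a change of variables.
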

\begin{proof}
Let $\bE = \left\{ \be^1,\ldots, \be^s\right\}$ be the canonical basis for $\mathbb{R}^s$.  Using \eqref{coneft}, we have
\begin{equation*}
    \CnsplineE = \prod_{k=1}^s \left( \frac{1}{i\langle \bomega, \be^k\rangle} + \pi\delta\left(\langle \bomega, \be^k\rangle\right)\right)^{n_k}
                    = \prod_{k=1}^s \left( \frac{1}{i\omega_k} + \pi\delta\left(\omega_k\right)\right)^{n_k}.
\end{equation*}
%
%
Applying the inverse Fourier transform to the above equation, noting that each factor in the $n_k$-fold direct product produces the truncated power function $\left(x_k\right)_+^0$, we obtain the $n_k$-fold convolution product $\left(x_k\right)_+^0 * \cdots * \left(x_k\right)_+^0$. It is easy to establish that this product is $\frac{1}{\left(n_k-1\right)!}\left(x_k\right)_+^{n_k-1}$.  Thus we have
\begin{equation*}
    \CnsplineE = \prod_{k=1}^s \frac{\left(x_k\right)_+^{n_k-1}}{\left(n_k-1\right)!}.
\end{equation*}
Using \eqref{conerotateknots}, with $A = M$, and the fact that $\Cnspline = C_{\mathbf{n}}\left(\bx \vb \left\{ M\be^1,\ldots, M\be^s\right\}\right)$, we can write
\begin{equation*}
    \Cnspline = \frac{1}{\lvert\det(M)\rvert} C_{\mathbf{n}}\left(M^{-1}\bx \vb \bE\right) = \frac{1}{\lvert\det(M)\rvert} \prod_{k=1}^s \frac{\langle \bx, \bxd^k\rangle}{\left(n_k-1\right)!}
\end{equation*}
and the proof is complete.
\end{proof}
We can use Proposition \ref{prop:conesdirection} with $s=2$ to find a closed formula for cone splines whose knot set is a $3$-directional mesh.  We need the following lemma:
\begin{iteratedintegral}
    \label{lemma:iteratedintegral}
For $n,m,\ell \in \Z_+$ and $x,a,b \in \mathbb{R}$ with $b < a < x$, let
\begin{equation*}
    F_{(n,m,\ell)}(x,a,b) = \frac{\scz{1}}{\scz{(n-1)!(m-1)!(\ell-1)!}}\int_a^x (u-b)^{n-1}(u-a)^{m-1}(x-u)^{\ell-1}\diff{u}.
\end{equation*}
Then
\begin{equation*}
    F_{(n,m,\ell)}(x,a,b) = \sum_{k=0}^{n-1} \binom{\scz{m+k-1}}{\scz{k}} \frac{\scz{1}}{\scz{(n-1-k)!(m+\ell+k-1)!}}(a-b)^{n-1-k}(x-a)_+^{m+\ell+k-1}.
\end{equation*}
\end{iteratedintegral}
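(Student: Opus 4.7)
The plan is to reduce the integral to a sum of Beta integrals by expanding one of the factors around the right endpoint $a$. Specifically, I would write
\[
(u-b)^{n-1} = \bigl((u-a)+(a-b)\bigr)^{n-1} = \sum_{k=0}^{n-1}\binom{n-1}{k}(a-b)^{n-1-k}(u-a)^{k},
\]
so that, after interchanging the finite sum with the integral, the integrand factors as $(u-a)^{m-1+k}(x-u)^{\ell-1}$ against a coefficient that does not depend on $u$.

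Next I would substitute $u=a+(x-a)v$, which sends the interval $[a,x]$ to $[0,1]$, turning each of the integrals into a classical Beta integral:
\[
\int_a^x (u-a)^{m-1+k}(x-u)^{\ell-1}\diff{u} = (x-a)^{m+\ell+k-1}\int_0^1 v^{m+k-1}(1-v)^{\ell-1}\diff{v} = (x-a)^{m+\ell+k-1}\,\frac{(m+k-1)!\,(\ell-1)!}{(m+\ell+k-1)!}.
\]
Since the hypothesis $b<a<x$ guarantees $x-a>0$, we may harmlessly replace $(x-a)^{m+\ell+k-1}$ by $(x-a)_+^{m+\ell+k-1}$, matching the form in the statement.

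Finally, I would combine the prefactor $\frac{1}{(n-1)!(m-1)!(\ell-1)!}$ with $\binom{n-1}{k}$ and the factorials $(m+k-1)!$, $(\ell-1)!$, $(m+\ell+k-1)!$ coming from the Beta integral, and simplify: the $(\ell-1)!$ cancels, the factor $\binom{n-1}{k}/(n-1)! = 1/(k!(n-1-k)!)$, and $(m+k-1)!/((m-1)!\,k!) = \binom{m+k-1}{k}$, giving the claimed closed form. The main obstacle is purely bookkeeping with the factorials; there is no analytic difficulty once the binomial expansion and Beta-integral substitution are in place.
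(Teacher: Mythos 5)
Your argument is correct: the binomial expansion of $(u-b)^{n-1}$ about $u=a$, the Beta-integral evaluation $\int_a^x (u-a)^{m+k-1}(x-u)^{\ell-1}\diff{u} = (x-a)^{m+\ell+k-1}\,\frac{(m+k-1)!\,(\ell-1)!}{(m+\ell+k-1)!}$, and the factorial bookkeeping $\frac{1}{(n-1)!}\binom{n-1}{k} = \frac{1}{k!\,(n-1-k)!}$ together with $\frac{(m+k-1)!}{(m-1)!\,k!} = \binom{m+k-1}{k}$ reproduce the stated sum exactly, and the hypothesis $a<x$ justifies writing $(x-a)_+$. The paper takes a different route: it quotes a tabulated integral from Samko, Kilbas and Marichev that evaluates $\frac{1}{\Gamma(\alpha)}\int_a^x (u-a)^{\beta-1}(u-b)^{\gamma-1}(x-u)^{\alpha-1}\diff{u}$ in terms of a Gauss hypergeometric function ${}_2F_1\left(1-\gamma,\beta;\alpha+\beta;-\frac{x-a}{a-b}\right)$, specializes to integer parameters so that the ${}_2F_1$ terminates, and converts the Pochhammer symbols to factorials. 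Your proof is more elementary and self-contained, needing only the binomial theorem and the Beta function; the paper's choice is motivated by reuse, since the same hypergeometric identity is applied verbatim later (Proposition 5.3) for fractional orders $\balpha\in\R_+^3$, where the exponents are no longer integers and your binomial expansion would have to be replaced by an infinite series. Either argument is a complete proof of the lemma as stated.
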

\begin{proof}
The proof is a straightforward application of the result
\begin{equation}
    \frac{1}{\Gamma(\alpha)} \int_a^x (u-a)^{\beta -1}(u-b)^{\gamma-1} (x-u)^{\alpha-1}\diff{u} = (a-b)^{\gamma-1}\frac{\Gamma(\beta)}{\Gamma(\alpha + \beta)}(x-a)^{\alpha+\beta-1}{}_2F_1\left(1-\gamma, \beta; \alpha+\beta; - \frac{x-a}{a-b}\right)       \label{stamko}
\end{equation}
that appears in \cite[p.~41]{book:samko} where $b < a < x$, $\alpha > 0$, $\mbox{Re}(\beta) > 0$, $\gamma \in \mathbb{C}$.  In our case, we insert positive integers $\gamma = n$, $\beta = m$, and $\alpha = \ell$ to write
\begin{align}
    F_{(n,m,\ell)}(x,a,b) &= \frac{1}{(n-1)!(m+\ell-1)!}(a-b)^{n-1}(x-a)^{m+\ell-1}{}_2F_1\left(1-n,m;m+\ell; - \frac{x-a}{a-b}\right) \nonumber \\
        &= \frac{1}{(n-1)!(m+\ell-1)!}\sum_{k=0}^\infty \frac{(1-n)_k (m)_k}{(m+\ell)_k} \frac{(-1)^k}{k!} (x-a)^{m+\ell+k-1}(a-b)^{n-1-k}. \label{2f1}
\end{align}
The rising Pochhammer symbols in the ${}_2F_1$ hypergeometric function are given by
\begin{align*}
    (1-n)_k &= (1-n)(2-n)\cdots (k-n) = (-1)^k \frac{(n-1)!}{(n-1-k)!} \\
    (m)_k &= m(m+1)\cdots (m+k-1) = \frac{(m+k-1)!}{(m-1)!} \\
    (m+\ell)_k &= \frac{(m+\ell+k-1)!}{m+\ell-1)!}.
\end{align*}
Inserting these identities into \eqref{2f1}, simplifying and noting $(x-a) = (x-a)_+$ gives the desired result.
%
%
%
%
%
%
%
%
%
\end{proof}
We are now ready to state a closed formula for cone splines on $3$-directional meshes:
\begin{cone3direction}
    \label{prop:cone3direction}
Suppose $\bX \subset \mathbb{R}^2$ is the $3$-directional knot set $\bX = \left\{ \bx^1, \bx^2, \bx^3\right\}$ with $\bx^1, \bx^2$ linearly independent and $\bx^3 = \bx^1 + \bx^2 = (1,0)^T$ and, without loss of generality, $x^1_2 > 0$.  Then the cone spline $\Cnspline$ is given by
\renewcommand{\arraystretch}{2.5}
\begin{align*}
    \Cnspline 
    &= \frac{1}{x^1_2} \left\{\begin{array}{ll} \sum\limits_{k=0}^{n_1-1} \binom{n_2+k-1}{k} \frac{1}{\left(n_1-1-k\right)!\left(n_2+n_3+k-1\right)!}\left(\frac{x_2}{x^1_2}\right)^{n_1-1-k}\left(x_1 - \frac{x^1_1}{x^1_2}x_2\right)_+^{n_2+n_3+k-1}, & x_2 \geq 0 \\
     \sum\limits_{k=0}^{n_2-1} \binom{n_1+k-1}{k} \frac{1}{\left(n_2-1-k\right)!\left(n_1+n_3+k-1\right)!}\left(-\frac{x_2}{x^1_2}\right)^{n_2-1-k}\left(x_1 - \frac{x^1_1-1}{x^1_2}x_2\right)_+^{n_1+n_3+k-1}, & x_2 < 0
    \end{array}\right..
\end{align*}
\renewcommand{\arraystretch}{1}
%
%
%
\end{cone3direction}
\begin{proof}
From \eqref{coneft}, we know that
\begin{equation*}
    \widehat{C}_{\mathbf{n}}(\bomega \vb \bX) = \widehat{C}_{\left(n_1,n_2\right)}\left(\bomega \vb \left\{ \bx^1, \bx^2\right\}\right) \cdot \left( \frac{1}{i\omega_1} + \pi \delta\left(\omega_1\right)\right)^{n_3}
\end{equation*}
so that
\begin{align}
    \Cnspline &= C_{\left(n_1,n_2\right)}\left(\bx \vb \left\{\bx^1, \bx^2\right\}\right) * \frac{\left(x_1\right)_+^{n_3-1}}{\left(n_3-1\right)!}\delta\left(x_2\right) \nonumber \\
        &= \left( \vert\det(M)\rvert \prod_{k=1}^3 \left(n_k-1\right)!\right)^{-1} \nonumber \\
        &  \qquad \qquad \times \int_{-\infty}^{x_1} \langle \left(u,x_2\right), \bxd^1\rangle_+^{n_1-1}\langle \left(u,x_2\right), \bxd^2\rangle_+^{n_2-1}\left(x_1-u\right)^{n_3-1}\diff{u}
                    \label{coneconvolve}
\end{align}
Since $\bx^1 + \bx^2 = (1,0)^T$, we know $\bx^2 = \left( 1-x^1_1, -x^1_2\right)^T$ and from here we can easily show that the dual basis is
\begin{equation}
    \left\{ \bxd^1, \bxd^2\right\} = \left\{ \left( \begin{array}{c} 1 \\ \frac{1-x^1_1}{x^1_2}\end{array}\right), \left(\begin{array}{c} 1 \\ -\frac{x^1_1}{x^1_2}\end{array}\right)\right\}  \label{dualvectors}
\end{equation}
and $\lvert \det(M)\rvert = x^1_2$.  Inserting these identities into \eqref{coneconvolve} and setting $K = \left( x^1_2\prod_{k=1}^3 \left(n_k-1\right)!\right)^{-1}$ gives
\begin{equation}
   \Cnspline = K\int_{-\infty}^{x_1} \left( u - \left(\frac{x_1^1-1}{x^1_2}\right) x_2\right)_+^{n_1-1}\left( u -\frac{x_1^1}{x_2^1} x_2\right)_+^{n_2-1}\left(x_1-u\right)^{n_3-1}\diff{u}.
                    \label{coneasintegral}
\end{equation}
If $x_2 > 0$, we have $\frac{x_1^1-1}{x^1_2}x_2 < \frac{x^1_1}{x^1_2}x_2 < x_1$ and \eqref{coneasintegral} becomes
\begin{equation*}
   \Cnspline = K\int\limits_{\frac{x_1^1}{x_2^1} x_2}^{x_1} \left( u - \left(\frac{x_1^1-1}{x^1_2}\right)x_2\right)^{n_1-1}\left( u - \frac{x_1^1}{x_2^1} x_2\right)^{n_2-1}\left(x_1-u\right)^{n_3-1}\diff{u}
\end{equation*}
and we can use Lemma \ref{lemma:iteratedintegral} to complete the proof.  The case where $x_2 < 0$ is similar and we can use the continuity of the cone spline \eqref{conecontinuity} for the case $x_2=0$.
        %
%
%

\end{proof}
\begin{transformationremark}
Using \eqref{conerotateknots}, we can construct a formula for the cone spline with the general $3$-directional mesh $\bx^1, \bx^2, \bx^1+\bx^2$.
\end{transformationremark}
\begin{condatremark}
Setting $\mathbf{n} = (n,n,n)$ in Proposition \ref{prop:cone3direction} gives the cone spline \eqref{condatcone} of Condat and Van De Ville.
\end{condatremark}

We use Proposition \ref{prop:cone3direction} to plot the cone splines shown in Figure \ref{fig:cone3splines}.
\begin{figure}[ht]
    \begin{center}
        \resizebox{4in}{!}{\includegraphics{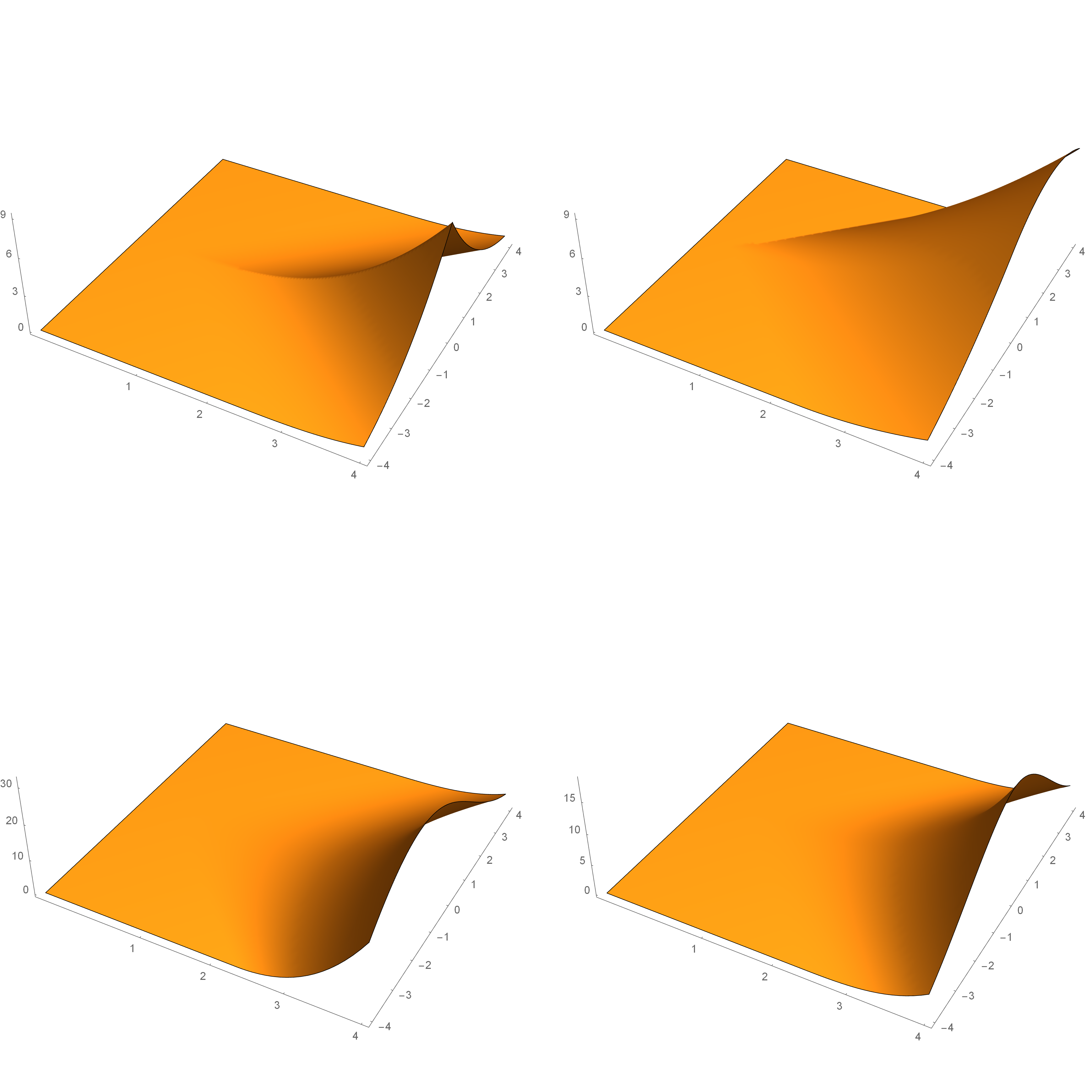}}
    \end{center}
    \caption{For $\bX = \left\{ \bx^1,\bx^2,\bx^3\right\}$ where $\bx^1 = \left(\frac{1}{2},\frac{\sqrt{3}}{2}\right)^T$, $\bx^2 = \left(\frac{1}{2},-\frac{\sqrt{3}}{2}\right)^T$ and $\bx^3 = (1,0)^T$, the cone splines plotted clockwise from the top left are $C_{(1,1,3)}(\bx \vb \bX)$, $C_{(2,1,1)}(\bx \vb \bX)$, $C_{(2,3,1)}(\bx \vb \bX)$, and $C_{(2,2,2)}(\bx \vb \bX)$.}
    \label{fig:cone3splines}
\end{figure}
\section{Hex Splines}
\label{section:hexsplines}
In this section we give a general formula for the so-called \textit{hex splines} introduced by Van De Ville et. al. \cite{article:vandeville}.  These splines were developed for applications that involve hexagonally sampled data.  The hex splines can be defined by using box splines with the knot set $\bXm = \left\{ \bx^1, \bx^2, -\bx^3\right\}\subset \mathbb{R}^2$ where $\bx^3 = \bx^1 + \bx^2$.  For this knot set and $\mathbf{n} = \left(n_1, n_2, n_3\right) \in \mathbb{Z}^3_+$, our hex spline takes the form
\begin{equation*}
    \Bnsplinem := B\left(\bx \;\Bigg\lvert \left\{ \underbrace{\strut \bx^1, \ldots \bx^1}_{n_1 \ \mbox{\scriptsize{times}}}, \underbrace{\bx^2, \ldots, \bx^2}_{n_2 \ \mbox{\scriptsize{times}}}, \underbrace{-\bx^3, \ldots -\bx^3}_{n_3 \ \mbox{\scriptsize{times}}} \right\}\right).
\end{equation*}
The resulting region of support of the spline is then a hexagon -- see Figure \ref{fig:hexsplinesupport}.
\begin{figure}[ht]
    \begin{center}
        \resizebox{\textwidth}{!}{\includegraphics{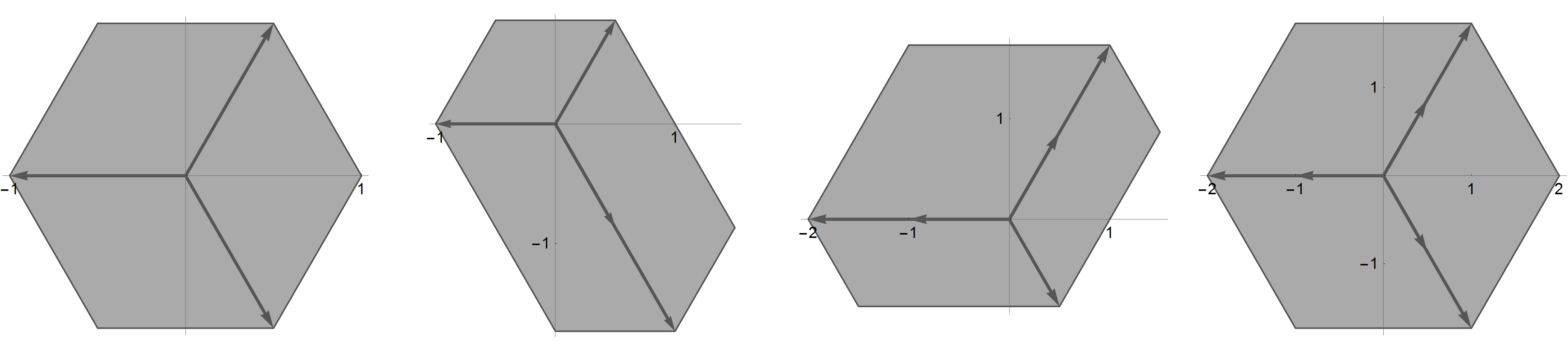}}
    \end{center}
    \caption{Support for hex splines (left to right) $B_{(1,1,1)}(\bx \vb \bXm)$, $B_{(2,1,1)}(\bx \vb \bXm)$, $B_{(1,2,2)}(\bx \vb \bXm)$, and $B_{(2,2,2)}(\bx \vb \bXm)$ using the knots $\bx^1 = \left(\frac{1}{2}, -\frac{\sqrt{3}}{2}\right)^T$, $\bx^2 = \left(\frac{1}{2}, \frac{\sqrt{3}}{2}\right)^T$, and $-\bx^3 = -\bx^1-\bx^2 = (-1,0)^T$.}
    \label{fig:hexsplinesupport}
\end{figure}
Consider the Fourier transform \eqref{boxft} of the box spline:
\begin{align}
    \widehat{B_{\mathbf{n}}}(\bomega \vb \bXm) &= \frac{\left( 1 - e^{-i\langle \bomega, \bx^1\rangle}\right)^{n_1}\left( 1 - e^{-i\langle \bomega, \bx^2\rangle}\right)^{n_2}\left( 1 - e^{i\langle \bomega, \bx^3\rangle}\right)^{n_3}}{\left(i\langle \bomega, \bx^1\rangle\right)^{n_1}\left(i\langle \bomega, \bx^2\rangle\right)^{n_2}\left(i\langle \bomega, -\bx^3\rangle\right)^{n_3}} \nonumber \\
     & \nonumber \\
     &=\frac{\left( 1 - e^{-i\langle \bomega, \bx^1\rangle}\right)^{n_1}\left( 1 - e^{-i\langle \bomega, \bx^2\rangle}\right)^{n_2}\left( e^{i\langle \bomega, \bx^3\rangle}-1\right)^{n_3}}{\left(i\langle \bomega, \bx^1\rangle\right)^{n_1}\left(i\langle \bomega, \bx^2\rangle\right)^{n_2}\left(i\langle \bomega, \bx^3\rangle\right)^{n_3}} \\
     & \nonumber \\
     &= \left( \frac{\left( 1 - e^{-i\langle \bomega, \bx^1\rangle}\right)}{i\langle \bomega, \bx^1\rangle}\right)^{n_1} \left( \frac{\left( 1 - e^{-i\langle \bomega, \bx^2\rangle}\right)}{i\langle \bomega, \bx^2\rangle}\right)^{n_2}\left( \frac{\left( e^{i\langle \bomega, \bx^3\rangle}-1\right)}{i\langle \bomega, \bx^3\rangle}\right)^{n_3}
            \label{boxadddeltas}
\end{align}
Using arguments from \cite[Section 3.11]{book:gelfand} and \cite[Theorem 7.1.16]{book:hormander}, we can rewrite each of the factors above as
\begin{equation*}
\left( \frac{\left( 1 - e^{-i\langle \bomega, \bx^k\rangle}\right)}{i\langle \bomega, \bx^k\rangle}\right)^{n_k} =
\left( 1 - e^{-i\langle \bomega, \bx^k\rangle}\right)^{n_k}\,\left( \frac{1}{i\langle \bomega, \bx^k\rangle} + \pi \delta\left(\langle \bomega,\bx^k\rangle\right)\right)^{n_k},
%
%
%
%
%
%
\end{equation*}
for $k=1,2$ (and a slight modification when $k=3$) so that \eqref{boxadddeltas} becomes
\begin{equation}
    \widehat{B_{\mathbf{n}}}(\bomega \vb \bXm) = \left( 1 - e^{-i\langle \bomega, \bx^1\rangle}\right)^{n_1}\left( 1 - e^{-i\langle \bomega, \bx^2\rangle}\right)^{n_2}  \left( e^{i\langle \bomega, \bx^3\rangle} - 1\right)^{n_3} \widehat{C_{\mathbf{n}}}(\bomega \vb \bX). \label{boxftviaconeft}
\end{equation}
Taking inverse Fourier transforms gives
\begin{equation}
    \Bnsplinem = f(\bx) * \Cnspline          \label{boxconvolve}
\end{equation}
where $f(\bx)$ is the inverse Fourier transform of
\begin{equation}
    \hat{f}(\bomega) =  \left( 1 - e^{-i\langle \bomega, \bx^1\rangle}\right)^{n_1}\left( 1 - e^{-i\langle \bomega, \bx^2\rangle}\right)^{n_2}  \left( e^{i\langle \bomega, \bx^3\rangle} - 1\right)^{n_3}.     \label{expproduct}
\end{equation}
As noted in \cite{article:condat}, if we can write \eqref{expproduct} as
\begin{equation*}
    \hat{f}(\omega) = \sum_{\mathbf{k}\in\mathbb{Z}^2} c_{k_1,k_2} e^{-i\langle \bomega, M\mathbf{k}\rangle}
\end{equation*}
where $M = \left[ \bx^1 \ \bx^2 \right]$, then
\begin{equation*}
    f(\bx) = \sum_{\mathbf{k}\in\mathbb{Z}^2} c_{k_1,k_2} \delta(\bx - M\mathbf{k})
\end{equation*}
so that \eqref{boxconvolve} becomes
\begin{equation}
    \Bnsplinem = \sum_{\mathbf{k}\in\mathbb{Z}^2} c_{k_1,k_2} C_{\mathbf{n}}(\bx - M\mathbf{k}).       \label{boxsum}
\end{equation}
In the case where $\mathbf{n} = (n, n, n)$, $n \in \Z_+$, and $\bx^3 = \bx^1 + \bx^2 = (1,0)^T$, Condat and Van De Ville \cite{article:condat} found the following formula for the coefficients in \eqref{boxsum}:
\begin{equation}
    c_{k_1,k_2} = \sum_{k=\max\left(k_1,k_2,0\right)}^{\min\left(n+k_1,n+k_2,n\right)} (-1)^{k_1+k_2+k} \binom{n}{k-k_1}\binom{n}{k-k_2}\binom{n}{k},       \label{condatcoeffs}
\end{equation}
and combining \eqref{condatcoeffs} and \eqref{boxsum} with the cone spline \eqref{condatcone}, they found the following formula for the hex spline:
\begin{align}
    \Bnspline &= \sum_{k_1,k_2 = -n}^n c_{k_1,k_2} \sum_{k=0}^{n-1} \binom{n-1+k}{k} \frac{1}{(2n-1+k)!(n-1-k)!} \nonumber \\
        & \quad  \times \vb \frac{2x_2}{\sqrt{3}} + k_1 - k_2\vb^{n-1-k} \left(x_1 - \frac{k_1+k_2}{2} - \vb \frac{x_2}{\sqrt{3}} + \frac{k_1-k_2}{2}\vb\right)_+^{2n-1+k}.      \label{condatbox}
\end{align}
Figure \ref{fig:condatsplines} shows the hex splines of Condat and Van De Ville for $n=1,2$.
\begin{figure}[ht]
    \begin{center}
        \resizebox{4in}{!}{\includegraphics{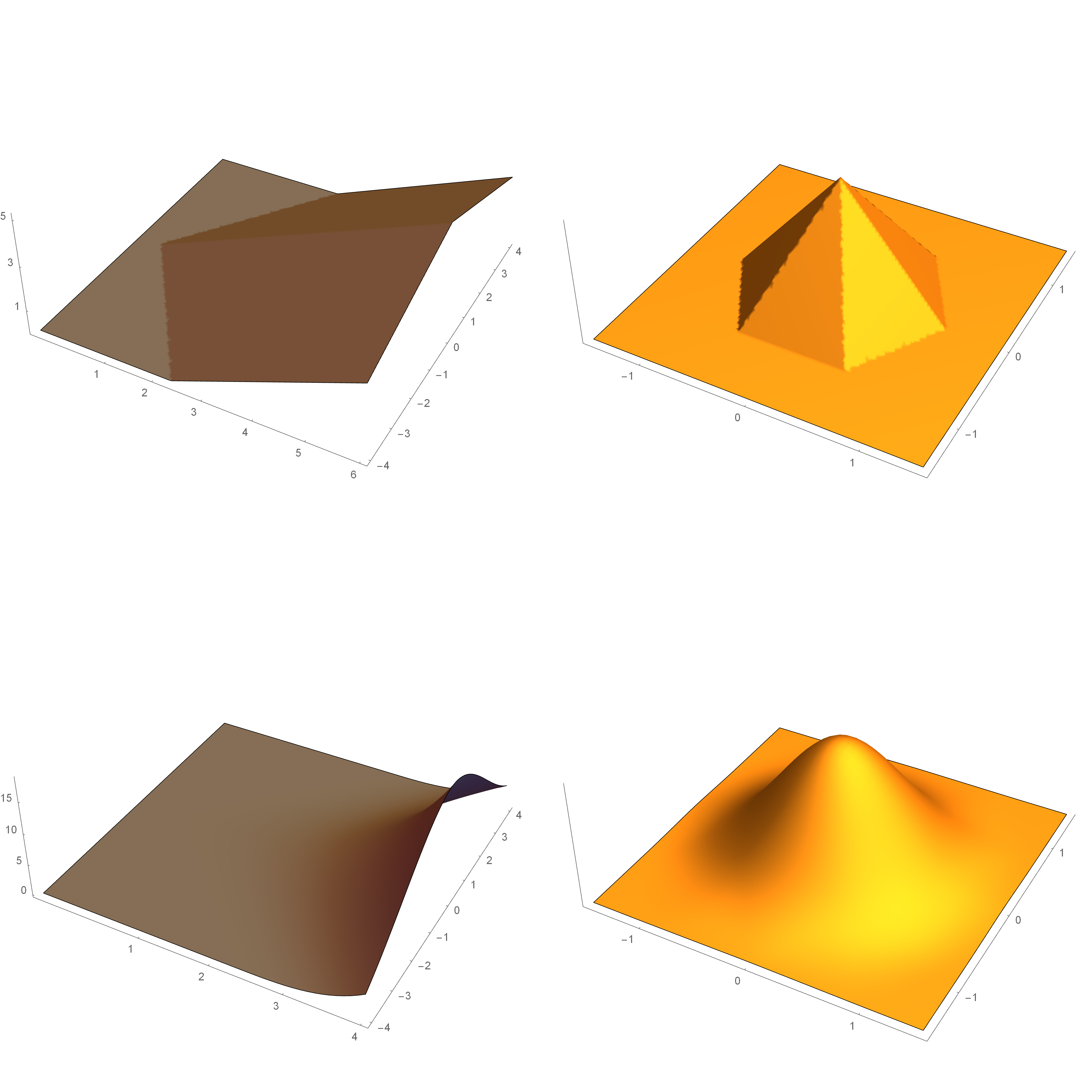}}
    \end{center}
    \caption{$\Cnspline$ generator for $\Bnspline$ for $n=1$ (top row) and $n=2$ (bottom row).  The functions in the top row are piecewise linear and the functions on the bottom row are piecewise quartic.  The support sets for these splines are plotted (first and last) in Figure \ref{fig:hexsplinesupport}.}
    \label{fig:condatsplines}
\end{figure}

It is straightforward to generalize \eqref{condatcoeffs} to general $\mathbf{n} = \left(n_1, n_2, n_3\right)$:
\begin{generalcoeffs}
For $\bX = \left\{ \bx^1, \bx^2, \bx^3\right\}$ where $\bx^3 = \bx^1 + \bx^2 = (1,0)^T$, $M = \left[\bx^1 \bx^2\right]$ and $\mathbf{n} = \left(n_1, n_2, n_3\right) \in \mathbb{Z}^3_+$, \eqref{expproduct} can be expressed as
\begin{equation*}
    \hat{f}(\omega) = \sum_{\mathbf{k}\in\mathbb{Z}^2} c_{k_1,k_2} e^{-i\langle \bomega, M\mathbf{k}\rangle}
\end{equation*}
where
\begin{align*}
    c_{k_1,k_2} &= \sum_{m=\max\left(n_3-n_1+k_1, n_3-n_2+k_2,0\right)}^{n_3 + \min\left(k_1,k_2,0\right)} \binom{n_1}{n_1-n_3-k_1+m}
        \binom{n_2}{n_2-n_3-k_2+m}\binom{n_3}{m} (-1)^{k_1+k_2+m}.
\end{align*}
        \label{prop:generalcoeffs}
\end{generalcoeffs}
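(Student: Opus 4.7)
The plan is to expand each of the three factors of $\hat{f}(\bomega)$ in \eqref{expproduct} via the binomial theorem, use the relation $\bx^3 = \bx^1 + \bx^2$ to reduce the product of exponentials to a function of $\langle\bomega,\bx^1\rangle$ and $\langle\bomega,\bx^2\rangle$ only, and then relabel the summation indices so the generic exponential matches the desired form $e^{-i\langle\bomega,M\bk\rangle} = e^{-ik_1\langle\bomega,\bx^1\rangle}\,e^{-ik_2\langle\bomega,\bx^2\rangle}$.

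First I would write
\[
(1-e^{-i\langle\bomega,\bx^1\rangle})^{n_1} = \sum_{j_1=0}^{n_1}\binom{n_1}{j_1}(-1)^{j_1}e^{-ij_1\langle\bomega,\bx^1\rangle},
\]
and analogously for the $n_2$-factor. For the third factor I find it convenient to use the reindexing $m\mapsto n_3-m$ in the binomial expansion of $(e^{i\langle\bomega,\bx^3\rangle}-1)^{n_3}$, which yields
\[
(e^{i\langle\bomega,\bx^3\rangle}-1)^{n_3} = \sum_{m=0}^{n_3}\binom{n_3}{m}(-1)^{m}\,e^{i(n_3-m)\langle\bomega,\bx^3\rangle}.
\]
Multiplying the three expansions and splitting $\langle\bomega,\bx^3\rangle = \langle\bomega,\bx^1\rangle+\langle\bomega,\bx^2\rangle$, I obtain a triple sum in which the generic exponential is $e^{-i(j_1-n_3+m)\langle\bomega,\bx^1\rangle}\,e^{-i(j_2-n_3+m)\langle\bomega,\bx^2\rangle}$.

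Setting $k_1 := j_1-n_3+m$ and $k_2 := j_2-n_3+m$, so that $j_1 = k_1+n_3-m$ and $j_2 = k_2+n_3-m$, the exponent takes exactly the target form $-i\langle\bomega,M\bk\rangle$, and using $\binom{n_1}{k_1+n_3-m}=\binom{n_1}{n_1-n_3-k_1+m}$ (likewise for $\binom{n_2}{\cdot}$) I recover precisely the binomial coefficients in the claimed formula. The sign collapses to $(-1)^{k_1+k_2+m}$, since the powers of $(-1)$ contributed by the $j_1,j_2,m$ expansions combine as $(-1)^{j_1+j_2+m}=(-1)^{(k_1+n_3-m)+(k_2+n_3-m)+m}=(-1)^{k_1+k_2+m}$.

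The last — and only slightly delicate — step will be extracting the stated summation bounds on $m$. Since the binomial coefficients vanish outside the natural ranges, I translate the four constraints $0\le m\le n_3$, $0\le k_1+n_3-m\le n_1$, and $0\le k_2+n_3-m\le n_2$ into a single interval for $m$; this yields $\max(n_3-n_1+k_1,\ n_3-n_2+k_2,\ 0)\le m\le n_3+\min(k_1,k_2,0)$, matching the proposition. The main obstacle is nothing more than careful index bookkeeping in matching up these constraints; there is no analytic difficulty, since the argument is a formal manipulation of finite Fourier series. Once the two representations of $\hat f(\bomega)$ agree term-by-term, the uniqueness of Fourier coefficients (as a tempered distribution in $\bomega$) gives the claim.
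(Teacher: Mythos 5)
Your proposal is correct and follows essentially the same route as the paper: binomial expansion of each factor, the substitution $k_1=j_1+m-n_3$, $k_2=j_2+m-n_3$ using $\bx^3=\bx^1+\bx^2$, interchange of summation order, and the bounds on $m$ read off from the vanishing of the binomial coefficients. Your handling of the exponentials is if anything cleaner than the paper's (which introduces $z_k=e^{-i\langle\bomega,\bx^k\rangle}$ with some internal sign inconsistencies), but the argument is the same.
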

\begin{proof}
We follow the argument used by Condat and Van De Ville \cite{article:condat}.  For the sake of notation, set $z_1 = e^{-i\langle \bomega, \bx^1\rangle}$ and $z_2 = e^{-i\langle \bomega, \bx^2\rangle}$.  Then \eqref{expproduct} becomes
\begin{equation*}
    f(\bomega) = \left(1-z_1^{-1}\right)^{n_1} \left( 1 - z_2^{-1}\right)^{n_2} \left(z_1 z_2 - 1\right)^{n_3}
\end{equation*}
Expanding each factor gives
\begin{equation*}
    f(\bomega) = \sum_{j=0}^{n_1}\sum_{k=0}^{n_2}\sum_{m=0}^{n_3} \binom{n_1}{j} \binom{n_2}{k} \binom{n_3}{m} (-1)^{j+k+m} z_1^{n_3-m-j} z_2^{n_3-m-k}.
\end{equation*}
Making the substitution $k_1 = j+m-n_3$ and $k_2 = k+m-n_3$ and simplifying yields
\begin{equation*}
    f(\bomega) = \sum_{m=0}^{n_3} \sum_{k_1 = m-n_3}^{n_1+m-n_3} \sum_{k_2=m-n_3}^{n_2+m-n_3} \binom{n_1}{n_1-n_3+m-k_1} \binom{n_2}{n_2-n_3+m-k_2}\binom{n_3}{m} (-1)^{k_1+k_2+m} z_1^{-k_1}z_2^{-k_2}.
\end{equation*}
We can interchange the order of summations by changing the limits on the inner two sums to $k_1, k_2 \in \mathbb{Z}$ and restricting the limits on $m$ so that the binomial coefficient factors are not zero.  These changes give the desired result for the $c_{k_1,k_2}$.
Finally, note that
\begin{equation}
    z_1^{-k_1}z_2^{-k_2} = e^{-i\left(\langle \bomega, \bx^1\rangle k_1 + \langle \bomega, \bx^2\rangle k_2\right)}
        = e^{-i\left(M^T \bomega\right)^T \cdot \mathbf{k}}
        = e^{-i\bomega^T \cdot M\mathbf{k}}
        = e^{-i\langle \bomega, M\mathbf{k}\rangle}.        \label{zprod}
\end{equation}
\end{proof}
We can use Propositions \ref{prop:cone3direction} and \ref{prop:generalcoeffs} in conjunction with \eqref{boxsum} to find explicit representations of hex splines.  The hex splines corresponding to the second and third knot sets from Figure \ref{fig:hexsplinesupport} are plotted in Figure \ref{fig:hexsplines}.
\begin{figure}[ht]
    \begin{center}
        \resizebox{4in}{!}{\includegraphics{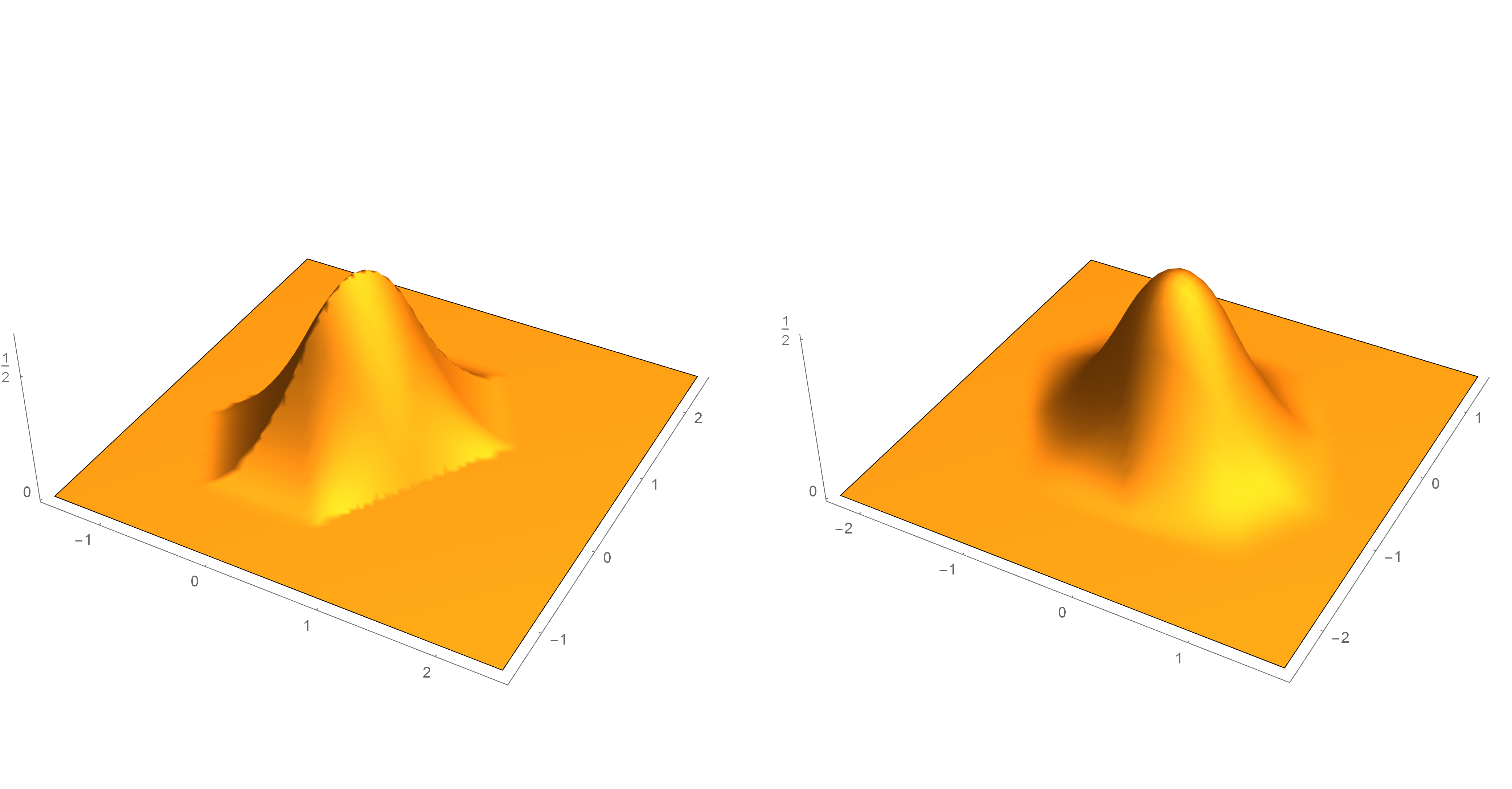}}
    \end{center}
    \caption{The hex splines $B_{(2,1,1)}(\bx \vb \bXm)$ and $B_{(1,2,1)}(\bx \vb \bXm)$.  The knots are $\bx^1 = \left(\frac{1}{2}, -\frac{\sqrt{3}}{2}\right)^T$, $\bx^2 = \left(\frac{1}{2}, \frac{\sqrt{3}}{2}\right)^T$, and $-\bx^3 = -\bx^1-\bx^2 = (-1,0)^T$.}
    \label{fig:hexsplines}
\end{figure}
\section{Fractional Cone Splines and Hex Splines} \label{section:fractionalconeshexes}
We can generalize the cone splines and hex splines from Sections \ref{section:conesplines} and \ref{section:hexsplines} by replacing the integer ``weights" that represent how many times we repeat the knots in $\bX$ with positive real-valued weights.
\subsection{Fractional cone splines}
We start with the fractional cone spline.
\begin{fracconedefn}\label{fracconedefn}
Let $\bX = \left\{ \bx^1, \ldots, \bx^n\right\} \subset \mathbb{R}^s \backslash \left\{ \mathbf{0}\right\}$, $n \geq s$, with $\spn(\bX) =\mathbb{R}^s$ and assume $\balpha = \left( \alpha_1,\ldots, \alpha_n\right) \in \mathbb{R}_+^n := \left\{ \bx \in \mathbb{R}^n \vb x_k > 0\right\}$.  We define the {\em fractional cone spline} in terms of its Fourier transform as the tempered distribution given by
\begin{equation}
    \widehat{\Calpha}(\bomega \vb \bX) =  \prod_{k=1}^n \left( \frac{1}{i\langle \bomega, \bx^k\rangle} + \sigma_k \pi \delta\left(\langle \bomega, \bx^k\rangle\right)\right)^{\alpha_k} \label{fractionalcone}
\end{equation}
where
\begin{equation*}
    \sigma_k = \left\{ \begin{array}{ll} 0,& \alpha_k \notin \Z_+ \\ 1, & \alpha_k \in \Z_+\end{array}\right. .
\end{equation*}
\label{defn:fracconedefn}
\end{fracconedefn}
Our first proposal gives a closed form for $\Calphaspline$ when $n=s$.
\begin{fraccones}
    \label{prop:fraccones}
Let $\bX = \left\{ \bx^1,\ldots, \bx^s\right\}$ be a linearly independent set of vectors in $\mathbb{R}^s$ with $M = \left[ \bx^1 \cdots \bx^s\right]$, $\balpha = \left(\alpha_1,\ldots,\alpha_s\right)\in \mathbb{R}_+^s$, and $\left\{ \bxd^1,\ldots,\bxd^s\right\}$ the dual basis to $\bX$.  Then
\begin{equation}
    \Calphaspline = \frac{1}{\lvert \det(M)\rvert} \prod_{k=1}^s  \frac{\langle \bx, \bxd^k\rangle_+^{\alpha_k-1}}{\Gamma\left(\alpha_k\right)}.  \label{fraccones2}
\end{equation}
\end{fraccones}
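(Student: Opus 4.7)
The plan is to reduce to the canonical basis case via a Fourier-domain change of variables and then compute the inverse Fourier transform as a tensor product of univariate factors.

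First, I would establish the fractional analogue of the change-of-knots formula \eqref{conerotateknots}. Writing $M = [\bx^1 \cdots \bx^s]$ and using $\langle \bomega, \bx^k\rangle = (M^T\bomega)_k$, the definition \eqref{fractionalcone} gives immediately
\[
\widehat{\Calpha}(\bomega \vb \bX) = \widehat{\Calpha}(M^T\bomega \vb \bE),
\]
where $\bE = \{\be^1,\ldots,\be^s\}$ is the canonical basis. The standard pullback formula for Fourier transforms of tempered distributions under a nonsingular linear change of variables then yields
\[
\Calphaspline = \frac{1}{|\det(M)|}\,\CalphasplineE.
\]
This step is formally identical to how \eqref{conerotateknots} was used in the proof of Proposition \ref{prop:conesdirection}; nothing breaks when $\alpha_k$ is non-integer because the identity is a statement about pulling back a tempered distribution by a linear isomorphism, independent of the specific form of the distribution.

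Second, on the canonical basis the Fourier transform decouples:
\[
\widehat{\Calpha}(\bomega \vb \bE) = \prod_{k=1}^s \left(\frac{1}{i\omega_k} + \sigma_k\, \pi\, \delta(\omega_k)\right)^{\alpha_k},
\]
where the $k$-th factor depends only on the variable $\omega_k$. Since this is a genuine tensor product of univariate distributions (no joint-variable issues arise), its inverse Fourier transform is the tensor product of the inverse Fourier transforms of the factors. The problem therefore reduces to the one-dimensional identity
\[
\mathcal{F}^{-1}\!\left[\left(\frac{1}{i\omega} + \sigma\, \pi\, \delta(\omega)\right)^{\alpha}\right]\!(x) = \frac{x_+^{\alpha-1}}{\Gamma(\alpha)}, \qquad \alpha > 0.
\]
For $\alpha \in \Z_+$ (so $\sigma = 1$) this is exactly what was established in the proof of Proposition \ref{prop:conesdirection} by convolving $\alpha$ copies of the Heaviside function. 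For $\alpha \notin \Z_+$ (so $\sigma = 0$) the factor reduces to $(i\omega)^{-\alpha}$, interpreted as a regularized tempered distribution, and the identity is the standard Fourier characterization of the Riemann--Liouville kernel $x_+^{\alpha-1}/\Gamma(\alpha)$, which is precisely the foundation of the univariate fractional B-spline theory of Blu and Unser \cite{article:ub}.

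Assembling the two steps and writing $(M^{-1}\bx)_k = \langle \bx, \bxd^k\rangle$ for the dual basis vectors $\bxd^k$ (the rows of $M^{-1}$) gives exactly \eqref{fraccones2}. The only substantive technical point is the non-integer univariate step: one must invoke the distributional Fourier calculus for the analytic family $x_+^{\alpha-1}/\Gamma(\alpha)$ (cf.\ Chapter~1 of \cite{book:gelfand}) to make rigorous sense of the factor $(i\omega)^{-\alpha}$ and identify its inverse Fourier transform. Everything else is bookkeeping, and crucially no cross-direction cancellation is needed, so the fractional case is handled direction-by-direction exactly as in the integer case treated in Proposition \ref{prop:conesdirection}.
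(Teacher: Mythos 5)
Your proof is correct and follows essentially the same route as the paper's: reduce to the canonical basis via $\widehat{\Calpha}(\bomega \vb \bX) = \widehat{\Calpha}(M^T\bomega \vb \bE)$, invert the resulting tensor product of univariate factors (Heaviside convolutions when $\alpha_k\in\Z_+$, the Riemann--Liouville kernel $x_+^{\alpha_k-1}/\Gamma(\alpha_k)$ as the inverse transform of $(i\omega)^{-\alpha_k}$ otherwise), and pull back by $M$ to produce the dual-basis expression. The only blemish is notational: your intermediate display should read $C_{\balpha}(\bx \vb \bX) = \lvert\det M\rvert^{-1}\, C_{\balpha}(M^{-1}\bx \vb \bE)$, with the argument $M^{-1}\bx$ rather than $\bx$, as your final assembly step correctly assumes.
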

\begin{proof}
Consider the canonical basis $\bE = \left\{ \be^1,\ldots,\be^s\right\}$ for $\mathbf{R}^s$.  Using \eqref{fractionalcone}, we see that
\begin{equation*}
    \widehat{\Calpha}(\bomega \vb \bE) = \prod_{k=1}^s \left( \frac{1}{i\omega_k} + \sigma_k \pi \delta\left(\omega_k\right)\right)^{\alpha_k}.
\end{equation*}
Thus
\begin{equation*}
    \CalphasplineE = \prod_{k=1}^s \int_{\mathbb{R}} \left( \frac{1}{i\omega} + \sigma_k \pi \delta(\omega)\right)^{\alpha_k} e^{i\omega x_k} \diff{\omega}.
\end{equation*}
When $\sigma_k = 1$, $\alpha_k \in \mathbb{Z}$ and the corresponding factor is simply an $\alpha_k$-fold convolution $\left(x_k\right)_+^0 * \cdots \left(x_k\right)_+^0 = \frac{1}{\Gamma\left(\alpha_k\right)} \left(x_k\right)_+^{\alpha_k-1}$.  When $\sigma_k = 0$, $\alpha_k \notin \mathbb{Z}$ and the factor is the inverse Fourier transform of $\frac{1}{(i\omega)^{\alpha_k}}$ which is again $\frac{1}{\Gamma\left(\alpha_k\right)} \left(x_k\right)_+^{\alpha_k-1}$.  Thus we see that
\begin{equation}
    \CalphasplineE = \prod_{k=1}^s \frac{\left(x_k\right)_+^{\alpha_k-1}}{\Gamma\left(\alpha_k\right)}.      \label{fracconecanonical}
\end{equation}
Note that
\begin{equation}
    \widehat{\Calpha}(\bomega \vb \bX) = \widehat{\Calpha}\left( M^T \bomega \vb \bE\right). \label{altCalphaE}
\end{equation}
We use the fact that the inverse Fourier transform for $\frac{1}{\vert \det(A)\rvert}\hat{f}(A\bomega)$, $A$ nonsingular, is $f(A\bx)$ in conjunction with \eqref{fracconecanonical} and \eqref{altCalphaE} to infer that
\begin{equation*}
    \Calphaspline = \lvert \det\left(M^{-1}\right)\rvert \Calpha\left(M^{-1}\bx \vb \bE\right)
        = \frac{1}{\lvert \det(M)\rvert} \prod_{k=1}^s  \frac{\langle \bx, \bxd^k\rangle_+^{\alpha_k-1}}{\Gamma\left(\alpha_k\right)}.
\end{equation*}
\end{proof}
Figure \ref{fig:fraccones2} illustrates the results of Proposition \ref{prop:fraccones}.
\begin{figure}[ht]
    \begin{center}
        \resizebox{\textwidth}{!}{\includegraphics{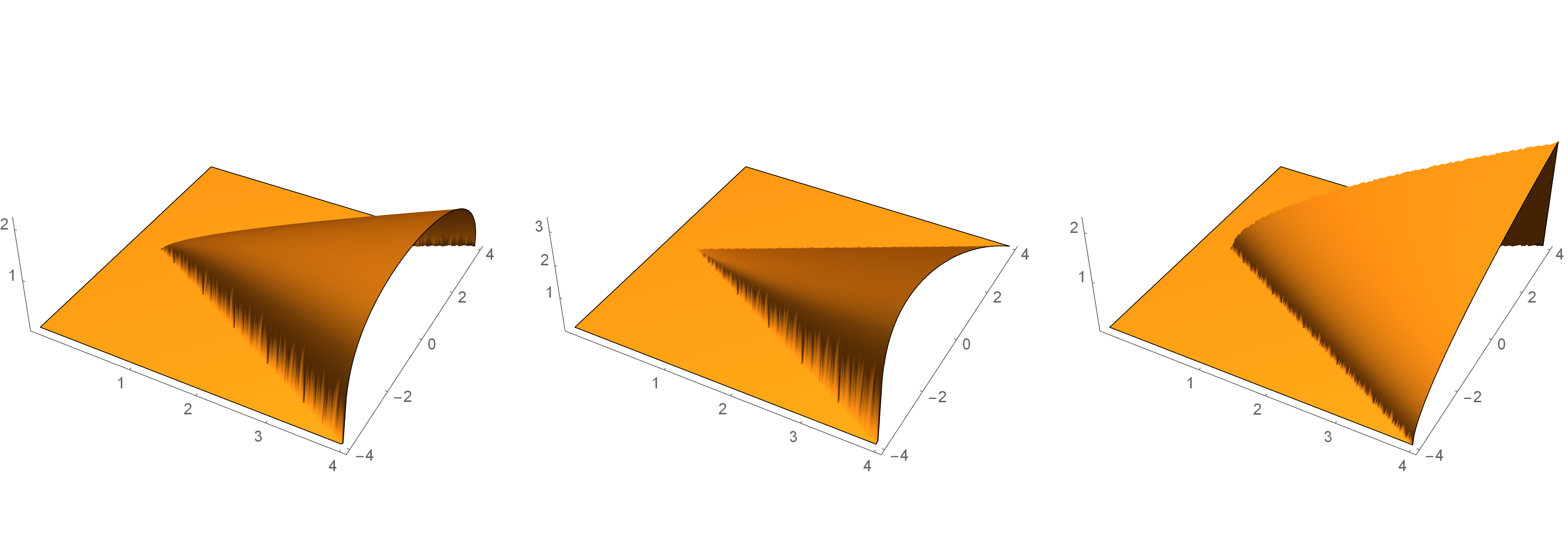}}
    \end{center}
    \caption{The splines $\Calphaspline$ for $\bX = \left\{ (1,1)^T, (1,-1)^T\right\}$ and, left to right, $\balpha = \left(\frac{5}{4}, \frac{3}{2}\right)$, $\balpha = \left(\frac{6}{5}, 2\right)$, and $\balpha = \left( \frac{3}{2}, 1\right)$.}
    \label{fig:fraccones2}
\end{figure}
For $\Calphaspline$ given by \eqref{fraccones2}, note that $\supp(\Calpha) = \cone(\bX)$ and it is straightforward to show that these $\Calphaspline$ obey a recurrence formula similar to \eqref{conerecurrence}.
\begin{fracconerecurrence}
    \label{prop:fracconerecurrence}
Let $\bX = \left\{ \bx^1, \ldots, \bx^s\right\}$ be a linearly independent set of vectors in $\mathbb{R}^s$ and $\be^k$ the $k$th canonical vector in $\mathbb{R}^s$.  If $\balpha - \be^k \in \mathbb{R}_+^s$ and $\lvert \balpha \rvert = \alpha_1 + \cdots + \alpha_s \neq s$, then
\begin{equation*}
    \Calphaspline = \frac{1}{\lvert \balpha\rvert - s} \sum_{k=1}^s \langle \bx, \bx^k\rangle C_{\balpha - \be^k}(\bx \vb \bX).
\end{equation*}
\end{fracconerecurrence}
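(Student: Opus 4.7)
The plan is to prove this recurrence by direct substitution into the closed-form expression \eqref{fraccones2} from Proposition \ref{prop:fraccones}, taking $\lambda_k := \langle \bx, \bxd^k \rangle$ (which I suspect is what the statement intends in the coefficient, since these are the barycentric-style coordinates satisfying $\bx = \sum_k \lambda_k \bx^k$; the untilded $\bx^k$ in the displayed formula appears to be a typo). Since the ranges of validity of the hypothesis $\balpha - \be^k \in \R_+^s$ and Proposition \ref{prop:fraccones} overlap, the verification becomes an algebraic identity involving the truncated powers $(\lambda_k)_+^{\alpha_k-1}$.

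The core computation goes as follows. On the interior of $\cone(\bX)$, all $\lambda_k > 0$, so the truncations drop out and
\[
\Calphaspline = \frac{1}{|\det M|} \prod_{k=1}^s \frac{\lambda_k^{\alpha_k-1}}{\Gamma(\alpha_k)}, \qquad C_{\balpha-\be^k}(\bx \vb \bX) = \frac{1}{|\det M|} \cdot \frac{\lambda_k^{\alpha_k-2}}{\Gamma(\alpha_k-1)} \prod_{j \neq k} \frac{\lambda_j^{\alpha_j-1}}{\Gamma(\alpha_j)}.
\]
Multiplying the second expression by $\lambda_k$ and using both $\lambda_k \cdot \lambda_k^{\alpha_k-2} = \lambda_k^{\alpha_k-1}$ and $\Gamma(\alpha_k) = (\alpha_k-1)\Gamma(\alpha_k-1)$ collapses the result to $(\alpha_k - 1) \Calphaspline$. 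Summing over $k$ gives $\sum_{k=1}^s \lambda_k C_{\balpha-\be^k}(\bx \vb \bX) = \left(\sum_k (\alpha_k - 1)\right) \Calphaspline = (|\balpha| - s) \Calphaspline$, and the assumption $|\balpha| \neq s$ lets us divide through to obtain the claimed identity.

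For $\bx \notin \cone(\bX)$, at least one $\lambda_j$ is negative, in which case $(\lambda_j)_+^{\alpha_j - 1} = 0$ kills both sides simultaneously (on the right, every term $C_{\balpha - \be^k}$ retains the vanishing factor for index $j$, since we only lower the exponent at index $k$). So the identity extends trivially off the cone.

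The only subtle part is the boundary $\partial\cone(\bX)$, where some $\lambda_j = 0$. Here the exponent $\alpha_k - 2$ appearing in $C_{\balpha - \be^k}$ is still $> -1$ thanks to the assumption $\balpha - \be^k \in \R_+^s$ (i.e.\ $\alpha_k > 1$), so $\lambda_k^{\alpha_k - 2}$ remains locally integrable; for $j \neq k$, the factor $\lambda_j^{\alpha_j - 1}$ again makes the corresponding summand vanish in concert with the left-hand side. Thus the identity, which is established pointwise on an open dense set, extends almost everywhere, which suffices for the equality of locally integrable functions defining the same distribution. I do not anticipate a serious obstacle — the work is essentially bookkeeping with Gamma-function identities — but care is warranted in tracking the truncations at the cone boundary to avoid a spurious discrepancy between the two sides.
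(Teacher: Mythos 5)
Your proposal is correct and follows essentially the same route as the paper's own proof: substitute the closed form from Proposition \ref{prop:fraccones}, use $\lambda_k\,\lambda_k^{\alpha_k-2}=\lambda_k^{\alpha_k-1}$ together with $\Gamma(\alpha_k)=(\alpha_k-1)\Gamma(\alpha_k-1)$ to get $(\alpha_k-1)\Calphaspline$ for each term, and sum over $k$. Your reading of the coefficient as $\langle \bx,\bxd^k\rangle$ (dual basis) is confirmed by the paper's proof, which computes with $\bxd^k$ throughout; your extra care at the cone boundary and off the cone is more than the paper itself provides, but harmless.
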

\begin{proof}
For $k=1,\ldots,s$,
\begin{align*}
    \langle \bx, \bxd^k\rangle C_{\balpha - \be^k}(\bx \vb \bX) &= \langle \bx, \bxd^k\rangle \left( \prod_{\substack{j=1 \\ j \neq k}}^s \frac{\langle \bx, \bxd^j\rangle_+^{\alpha_j-1}}{\Gamma\left(\alpha_j\right)}\right) \frac{\langle \bx, \bxd^k\rangle_+^{\alpha_k-2}}{\Gamma\left(\alpha_k-1\right)} \\
    &= \left( \prod_{j=1}^s \frac{\langle \bx, \bxd^j\rangle_+^{\alpha_j-1}}{\Gamma\left(\alpha_j\right)}\right) \left(\alpha_k-1\right) \\
    &= \left(\alpha_k-1\right)\Calphaspline.
\end{align*}
Summing both sides for $k=1,\ldots, s$, gives the desired result.
\end{proof}
We can generalize Proposition \ref{prop:cone3direction} for fractional cone splines on $3$-directional meshes.
\begin{propfraccone3}
    \label{prop:fraccone3}
Let $\bX$ be a $3$-directional mesh in $\mathbb{R}^2$ with $\bx^3 = \bx^1+\bx^2 = (1,0)^T$ and $\balpha \in \mathbb{R}_+^3$.  Then
\begin{equation*}
    \Calphaspline = \frac{1}{x^1_2} \left\{ \begin{array}{ll} \frac{1}{\Gamma\left(\alpha_1\right)\Gamma\left(\alpha_2+\alpha_3\right)} \left(\frac{x_2}{x^1_2}\right)^{\alpha_1-1} \left(x_1 - \frac{x^1_1}{x^1_2}x_2\right)_+^{\alpha_2+\alpha_3-1}{}_2F_1\left(\scz{1-\alpha_1}, \scz{\alpha_2}; \scz{\alpha_2 + \alpha_3}; -\frac{x_1-\frac{x^1_1}{x^1_2}x_2}{\frac{x_2}{x^1_2}}\right), & x_2 > 0 \\
    \frac{1}{\Gamma\left(\alpha_2\right)\Gamma\left(\alpha_1+\alpha_3\right)} \left(-\frac{x_2}{x^1_2}\right)^{\alpha_2-1} \left(x_1 - \frac{x^1_1-1}{x^1_2}x_2\right)_+^{\alpha_1+\alpha_3-1}{}_2F_1\left(\scz{1-\alpha_2}, \scz{\alpha_1}; \scz{\alpha_1 + \alpha_3}; \frac{x_1-\frac{x^1_1-1}{x^1_2}x_2}{\frac{x_2}{x^1_2}}\right), & x_2 < 0 \\
    \frac{1}{\Gamma\left(\alpha_1\right)\Gamma\left(\alpha_2\right)}\left(\sum\limits_{k=0}^\infty \frac{(-1)^k}{k! \Gamma\left(\alpha_3-k\right)\left(\alpha_1+\alpha_2+k-1\right)}\right)\left(x_1\right)_+^{\alpha_1+\alpha_2+\alpha_3-2}, & x_2 = 0
        \end{array}\right.
\end{equation*}
\end{propfraccone3}
\begin{proof}
This proof follows in much the same was as that of Proposition \ref{prop:cone3direction}.  We can write
\begin{equation*}
    \widehat{\Calpha}(\bomega \vb \bX) = \widehat{C}_{\left(\alpha_1,\alpha_2\right)}\left(\bomega \vb \left\{ \bx^1, \bx^2\right\}\right)\cdot \left( \frac{1}{i\omega_1}+\pi \sigma_3 \delta\left(\omega_1\right)\right),
\end{equation*}
where $\sigma_3 = 1$ if $\alpha_3 \in \mathbb{Z}$, $\sigma_3 = 0$ if $\alpha_3 \notin\mathbb{Z}$, so that
\begin{align}
    \Calphaspline &= C_{\left( \alpha_1,\alpha_2 \right)}(\bomega \vb \bX) * \frac{\left(x_1\right)_+^{\alpha_3-1}}{\Gamma\left( \alpha_3\right)} \delta\left(x_2\right) \nonumber \\
        &= \left(\lvert\det(M)\rvert\prod_{k=1}^3 \Gamma\left(\alpha_k\right)\right)^{-1} \int_{-\infty}^{x_1} \langle \left(u,x_2\right), \bxd^1\rangle_+^{\alpha_1-1} \langle \left(u,x_2\right), \bxd^2\rangle_+^{\alpha_2-1} \left(x_1 - u\right)^{\alpha_3-1}\diff{u}. \label{calphaconvolve}
\end{align}
The dual basis vectors are given in \eqref{dualvectors} so that \eqref{calphaconvolve} becomes
\begin{equation}
    \Calphaspline = \left(x_2^1 \prod_{k=1}^3 \Gamma\left(\alpha_k\right)\right)^{-1}  \int_{-\infty}^{x_1} \left( u - \left(\frac{x_1^1-1}{x^1_2}\right) x_2\right)_+^{\alpha_1-1}\left( u -\frac{x_1^1}{x_2^1} x_2\right)_+^{\alpha_2-1}\left(x_1-u\right)^{\alpha_3-1}\diff{u}.
                    \label{alphaintegral}
\end{equation}
If $x_2 > 0$, we have $\frac{x_1^1-1}{x^1_2}x_2 < \frac{x^1_1}{x^1_2}x_2 < x_1$ and \eqref{alphaintegral} becomes
\begin{equation*}
   \Calphaspline = \left(x_2^1 \prod_{k=1}^3 \Gamma\left(\alpha_k\right)\right)^{-1}\int\limits_{\frac{x_1^1}{x_2^1} x_2}^{x_1} \left( u - \left(\frac{x_1^1-1}{x^1_2}\right)x_2\right)^{\alpha_1-1}\left( u - \frac{x_1^1}{x_2^1} x_2\right)^{\alpha_2-1}\left(x_1-u\right)^{\alpha_3-1}\diff{u}
\end{equation*}
and we can use \eqref{stamko} to complete the proof.  The case where $x_2 < 0$ is similar.  For $x_2=0$, we use \eqref{alphaintegral} to write
\begin{equation*}
    \Calpha\left(\left(x_1,0\right)\vb \bX\right) = \left(x_2^1 \prod_{k=1}^3 \Gamma\left(\alpha_k\right)\right)^{-1}  \int_0^{x_1} u^{\alpha_1+\alpha_2-2}\left(x_1 - u\right)^{\alpha_3-1}\diff{u}.
\end{equation*}
The generalized binomial theorem can be used to complete the proof.
\end{proof}
\begin{reductionremark}
If each of $\alpha_1, \alpha_2, \alpha_3$ are integers, Proposition \ref{prop:fraccone3} reduces to Proposition \ref{prop:cone3direction}.
\end{reductionremark}
Figure \ref{fig:fraccone3} illustrates the results of Proposition \ref{prop:fraccone3}.
\begin{figure}[ht]
    \begin{center}
        \resizebox{\textwidth}{!}{\includegraphics{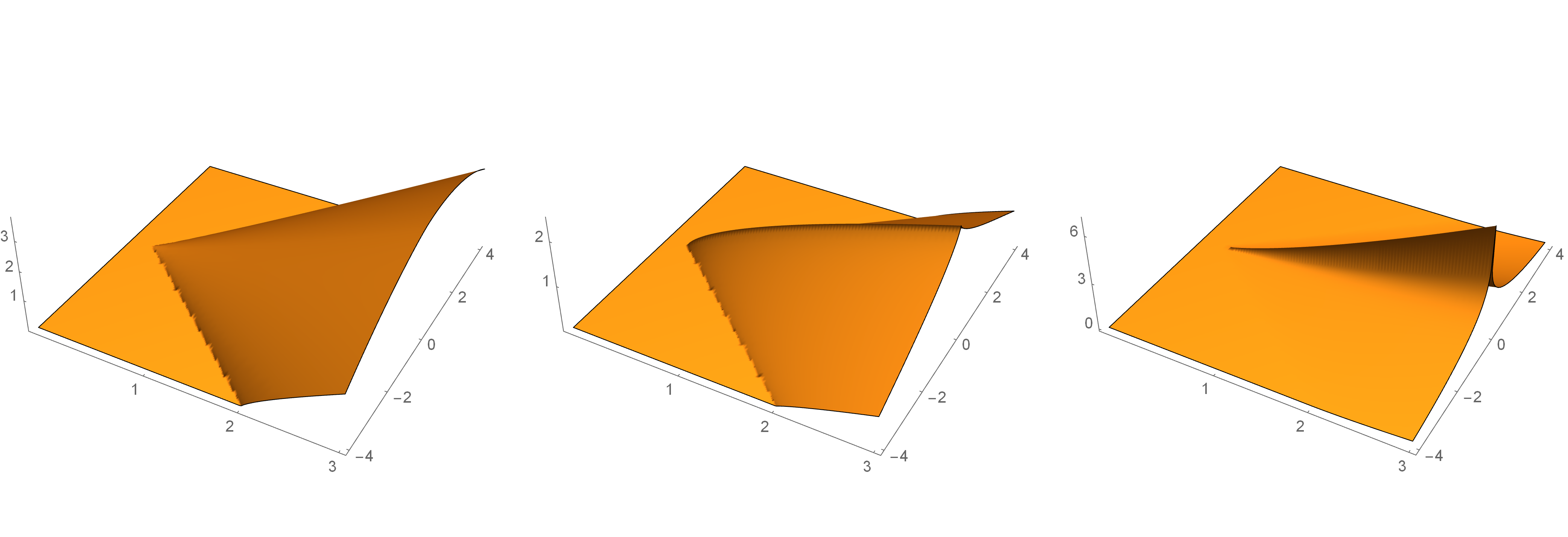}}
    \end{center}
    \caption{The splines $\Calphaspline$ for $\bX = \left\{ \left(\frac{1}{2},1\right)^T, \left(-\frac{1}{2},-1\right)^T, (1,0)^T \right\}$ and, left to right, $\balpha = \left(\frac{3}{2}, \frac{5}{4}, \frac{1}{3} \right)$, $\balpha = \left(\frac{9}{10}, \frac{4}{5}, 1\right)$, and $\balpha = \left( \frac{3}{5}, \frac{3}{5}, 2\right)$.}
    \label{fig:fraccone3}
\end{figure}
\subsection{Fractional hex splines}
In this subsection, we define a bivariate fractional hex spline implicitly in terms of its Fourier transform.  We then use this definition to derive an explicit formula for the spline.  Motivated by \eqref{boxftviaconeft} and Definition \ref{defn:fracconedefn}, we make the following definition:
\begin{frachexdefn}\label{frachexdefn}
Let $\bX = \left\{ \bx^1, \bx^2, \bx^3 \right\}$ be a $3$-directional mesh in $\mathbb{R}^2$ with $\bx^1, \bx^2$ linearly independent and $\bx^3 = \bx^1 + \bx^2 = (1,0)^T$ and assume $\balpha \in \mathbb{R}_+^3$.  We define the \emph{fractional hex spline of order $\balpha$} in terms of its Fourier transform (as the tempered distribution):
\begin{equation}
    \widehat{\Balpha}(\bomega \vb \bXm) =  \left( 1 - e^{-i\langle \bomega, \bx^1\rangle}\right)^{\alpha_1}\left( 1 - e^{-i\langle \bomega, \bx^2\rangle}\right)^{\alpha_2}  \left( e^{i\langle \bomega, \bx^3\rangle} - 1\right)^{\alpha_3} \widehat{\Calpha}(\bomega \vb \bX) \label{fractionalhexft}
\end{equation}
where $\bXm = \left\{ \bx^1, \bx^2, -\bx^3\right\}$.
\label{defn:frachexdefn}
\end{frachexdefn}
Taking inverse Fourier transforms of \eqref{fractionalhexft} gives
\begin{equation}
    \Balpha(\bomega \vb \bXm) = f(\bx) * \Calpha(\bx \vb \bX)       \label{hexalphaasconv}
\end{equation}
where
\begin{equation*}
    \hat{f}(\bomega) = \left( 1 - e^{-i\langle \bomega, \bx^1\rangle}\right)^{\alpha_1}\left( 1 - e^{-i\langle \bomega, \bx^2\rangle}\right)^{\alpha_2}  \left( e^{i\langle \bomega, \bx^3\rangle} - 1\right)^{\alpha_3}.
\end{equation*}
Letting $z_k = e^{-i\langle\bomega, \bx^k\rangle}$, $k=1,2$, we use the generalized binomial theorem to write
\begin{align}
    \hat{f}(\bomega) &= \sum_{\ell \geq 0} \sum_{k_1 \geq 0} \sum_{k_2 \geq 0} \binom{\alpha_1}{k_1} \binom{\alpha_1}{k_2} \binom{\alpha_1}{\ell} (-1)^{k_1+k+\ell} z_1^{-\ell-k_1}z_2^{-\ell-k_2}\left(z_1z_2\right)^{\alpha_3} \nonumber \\
        &= \sum_{\ell \geq 0} \sum_{k_1 \geq -\ell} \sum_{k_2 \geq -\ell} \binom{\alpha_1}{k_1+\ell} \binom{\alpha_1}{k_2+\ell} \binom{\alpha_1}{\ell} (-1)^{k_1+k+\ell}z_1^{-k_1}z_2^{-k_2}\left(z_1z_2\right)^{\alpha_3} \nonumber \\
        &= \sum_{\mathbf{k}\in\mathbb{Z}^2} \left( \sum_{\ell \geq \max\left(k_1,k_2,0\right)} \binom{\alpha_1}{\ell-k_1}\binom{\alpha_2}{\ell-k_2}\binom{\alpha_3}{\ell} (-1)^{k_1+k+\ell}\right)z_1^{-k_1}z_2^{-k_2}\left(z_1z_2\right)^{\alpha_3} \nonumber \\
        &= \sum_{\mathbf{k}\in\mathbb{Z}^2} c_{k_1,k_2} e^{-i\langle\bomega, M\left(\mathbf{k}-\boldsymbol{\alpha}_3\right)\rangle} \label{hatfhex}
\end{align}
where $\mathbf{k} = \left(k_1,k_2\right)^T$, $\boldsymbol{\alpha}_3 = \left(\alpha_3,\alpha_3\right)^T$, $M = \left[ \bx^1 \ \bx^2 \right]$ and
\begin{equation}
    c_{k_1,k_2} = \sum_{\ell \geq \max\left(k_1,k_2,0\right)} \binom{\alpha_1}{\ell-k_1}\binom{\alpha_2}{\ell-k_2}\binom{\alpha_3}{\ell} (-1)^{k_1+k_2+\ell}.         \label{ckforhexf}
\end{equation}
We have used \eqref{zprod} to simplify the last identity \eqref{hatfhex} above.

We take inverse Fourier transforms of \eqref{hatfhex} to obtain
\begin{equation*}
    f(\bx) = \sum_{\mathbf{k}\in\mathbb{Z}^2} c_{k_1,k_2} \delta\left( \bx - M\left(\mathbf{k}-\boldsymbol{\alpha}_3\right)\right).
\end{equation*}
The relation \eqref{hexalphaasconv} and the above argument gives rise to the following time-domain representation of $\Balphasplinem$.
\begin{propfrachex3}
    \label{prop:frachex3}
For the fractional hex spline in Definition \eqref{defn:frachexdefn}, we have the following explicit representation:
\begin{equation}
    \Balphasplinem = \sum_{\mathbf{k}\in\mathbb{Z}^2} c_{k_1,k_2} \Calpha\left(\bx- M\left(\mathbf{k}-\boldsymbol{\alpha}_3\right) \vb \bX\right)        \label{hexexplicit}
\end{equation}
where the coefficients $c_{k_1,k_2}$ are given by \eqref{ckforhexf} and $\Calphaspline$ is given by the relation in Proposition \ref{prop:fraccone3}.
\end{propfrachex3}
Proposition \ref{prop:frachex3} is used to evaluate the hex spline plotted in Figure \ref{fig:frachex3}.
\begin{figure}[ht]
    \begin{center}
        \resizebox{\textwidth}{!}{\includegraphics{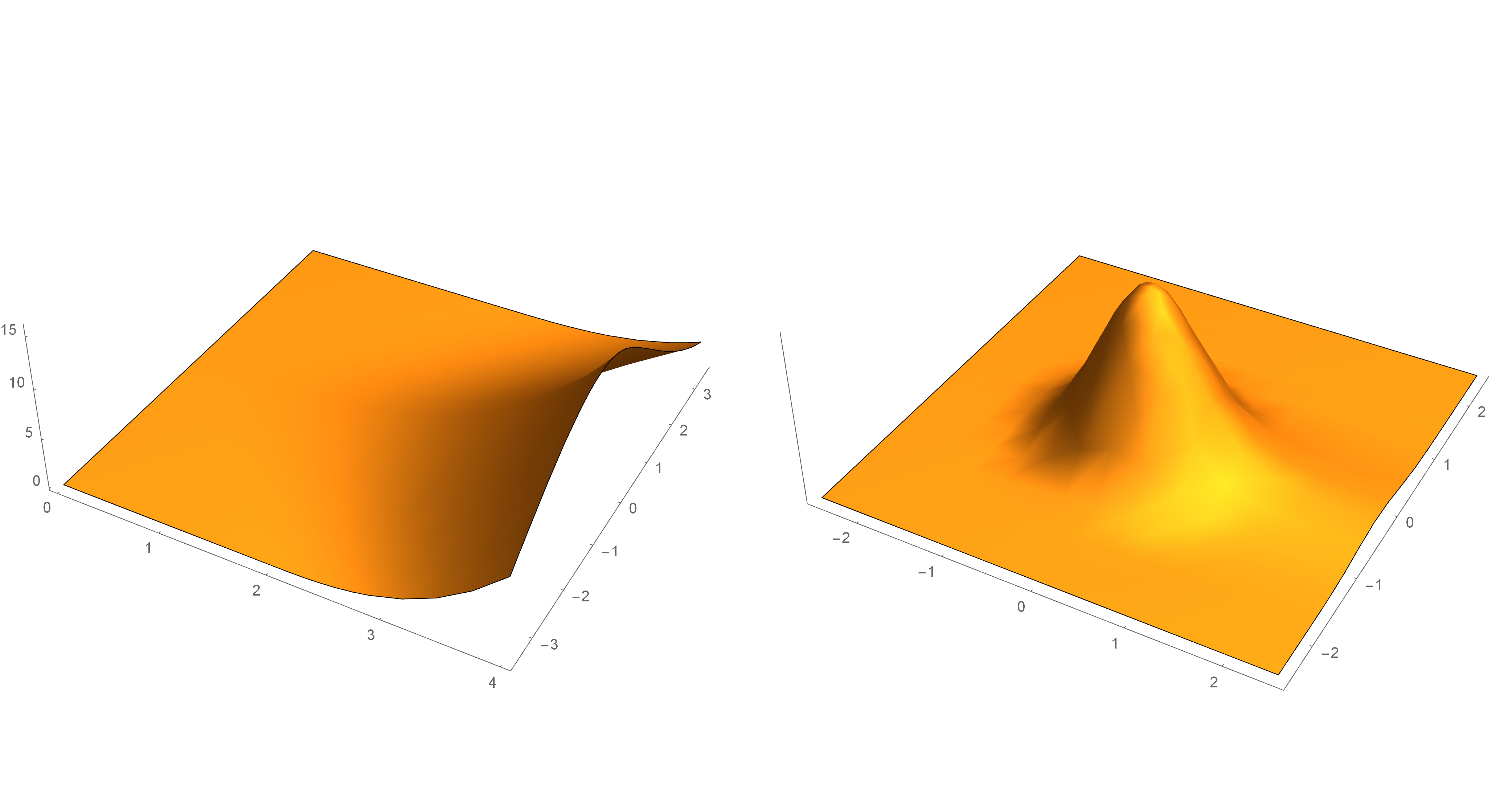}}
    \end{center}
    \caption{The fractional cone spline $\Calphaspline$ and box spline $\Balphasplinem$ for $\bX = \left\{ \left(\frac{1}{2}, \frac{\sqrt{3}}{2}\right)^T, \left(\frac{1}{2}, -\frac{\sqrt{3}}{2}\right)^T, (1,0)^T \right\}$ and $\balpha = \left(1.5, 2.1, 2\right)$.}
    \label{fig:frachex3}
\end{figure}
\begin{differenceremark}
Notice that \eqref{fractionalhexft} can also be written in the form
\begin{equation*}
    \widehat{\Balpha}(\bomega \vb \bXm) = \widehat{\nabla}_\bX^\alpha\,\widehat{\Calpha}(\bomega \vb \bX),
\end{equation*}
where $\widehat{\nabla}_\bX^\alpha$ is now the Fourier transform of the {\em fractional forward difference operator  of order $\alpha := (\alpha_1,\alpha_2, \alpha_3)\in \R_+^3$} defined by
\begin{equation*}
    \nabla_\bX^\alpha := \prod_{i=1}^3\,\nabla_{\bx^i}^{\alpha_i},
\end{equation*}
where, for any function $g:\R^3\to \R$,
\begin{equation*}
    \nabla_{\bx^i}^{\alpha_i} g := \sum_{k=0}^\infty \binom{\alpha_i}{k} (-1)^k g(\bullet - k \bx^i),
\end{equation*}
(with a slight modification for $i = 3$).
\end{differenceremark}

\section{Hex Spline Properties and Splines of Complex Order}
We conclude the paper by stating several properties obeyed by hex splines and developing definitions of cone and hex splines of complex order.  Unlike polynomial box splines, fractional hex splines are not compactly supported for $\balpha \notin \mathbb{Z}^3$. Thus we give decay rates for $\Balphasplinem$.  We also show that hex splines are refinable and give an explicit formula for the refinement mask.  Finally we show that a hex spline and its translates along the integer lattice $\mathbb{Z}^2$ form a Riesz sequence and thus a Riesz basis for an appropriate shift-invariant subspace of $L^2\left(\mathbb{R}^2\right)$.

\subsection{Decay and Refinement}
We first investigate the decay rate of the fractional hex spline $\Balphasplineb$. To this end, recall that a function $g: \R^2\to \R$ belongs to the Sobolev space $W^{r,p}(\R^2)$, $r > 0$, $1< p < \infty$, iff
\beq\label{sobolev}
(1 + \norm{\bullet}^2)^{r/2}\,\widehat{g} \in L^p (\R^2).
\eeq
It follows from Definition \ref{frachexdefn} that $|\widehat{\Balpha}(\bomega \vb \bXm)| \leq \frac{K}{\norm{\bomega}^{|\balpha|}}$, where $K$ denotes a positive constant that depends only on $\bX$.  Hence, $\Balphasplineb\in L^2(\R^2)$ if $|\balpha| > 1$, and $\Balphasplineb\in L^1(\R^2)\cap L^2(\R^2)$ if $|\balpha| > 2$. Equation \eqref{sobolev} together with the above estimate on $|\widehat{\Balpha}(\bullet \vb \bXm)|$ imply that
\[
\Balphasplineb\in W^{r,p}(\R^2),\quad\text{for $r < |\balpha| - \frac2p$}.
\]
At this point, we recall the Sobolev Embedding Theorem in $\R^2$ \cite[Theorem 4.12]{book:adams}:
If $j\in \N_0$, $1\leq m\in \N$, and $mp > 2$, then
\[
W^{j+m,p}(\R^2) \hookrightarrow C^j_b (\R^2),
\]
where $C_b^j(\R^2)$ denoted the space of all bounded continuous functions with derivatives up to order $j$.

Choose $m=2$ and $p=2$. Then $\widehat{\Balpha}(\bullet \vb \bXm)\in C_b^j(\R^2)$ for $|\balpha| > j + 3$. In other words, for every multi-index $\bmu$ with $|\bmu|\leq j$, $\partial^\bmu\widehat{\Balpha}(\bullet \vb \bXm)$ is bounded and continuous. But then $\bx^\bmu {\Balpha}(\bx \vb \bXm)$ is uniformly continuous and vanishes at infinity. Hence,
\[
{\Balpha}(\bx \vb \bXm) \in \mathcal{O}(\bx^{-\bmu}), \quad\text{for $|\balpha| > j + 3$}.
\]
The fractional hex splines $\Balphasplineb$ satisfy two-scale relations of the form
\beq\label{twoscale}
{\Balpha}(\bx \vb \bXm) = \sum_{\bk\in \Z^2} h(\bk) {\Balpha}(2\bx - M(\bk-\balpha_3) \vb \bXm),
\eeq
with $M = \left[ \bx^1 \ \bx^2 \right]$ and $\boldsymbol{\alpha}_3 = \left(\alpha_3,\alpha_3\right)^T$, which are valid for $|\balpha| > 1$ and for almost all $\bx\in \R^2$.

To show the validity of the above refinement equation, recall that -- if such an equation holds -- the coefficients $h(\bk)$ are the Fourier coefficients of the frequency response of the refinement filter $H_{\balpha}$. In other words,
\begin{align*}
H_{\balpha} (\bomega) &= \frac{2 \widehat{\Balpha}(2 \bomega \vb \bXm)}{\widehat{\Balpha}(\bomega \vb \bXm)} = 2\,\prod_{j=1}^3\left(\frac{i\inn{\bomega}{\bx^j}}{i\inn{2\bomega}{\bx^j}}\right)^{\alpha_j}
\left(\frac{1 - e^{-2i\inn{\bomega}{\bx^j}}}{1 - e^{-i\inn{\bomega}{\bx^j}}}\right)^{\alpha_j}\quad\text{(with modification for $j=3$)}\\
&= \frac{1}{2^{|\balpha|-1}} \prod_{j=1}^2 \left(1+ e^{-i\inn{\bomega}{\bx^j}}\right)^{\alpha_j} \left(1+ e^{i\inn{\bomega}{\bx^3}}\right)^{\alpha_3},\quad \text{(almost everywhere)}
\end{align*}
with $\bx^3 = \bx^1 + \bx^2 = (1,0)^T$. Expanding the threefold product in the last equation above using the generalized binomial theorem and proceeding as in the derivation of \eqref{hatfhex}, we obtain
\beq\label{H}
H_{\balpha} (\bomega) = \frac{1}{2^{|\balpha|-1}} \sum_{\mathbf{k}\in\mathbb{Z}^2} a (\bk) e^{-i\langle\bomega, M\left(\mathbf{k}-\boldsymbol{\alpha}_3\right)\rangle},
\eeq
where
\[
a (\bk) := \sum_{\ell \geq \max\left(k_1,k_2,0\right)} \binom{\alpha_1}{\ell-k_1}\binom{\alpha_2}{\ell-k_2}\binom{\alpha_3}{\ell}.
\]
Equation \eqref{H} now yields the desired two-scale relation \eqref{twoscale}.
\subsection{Riesz basis property}
We next consider bivariate hex splines and their translates along the lattice $\mathbb{Z}^2$ and show that this set of functions forms a Riesz sequence and thus a Riesz basis for the appropriate shift-invariant subspace of $L^2(\R^2)$. To this end, we introduce the following fractional hex spline space.

For the knot set $\bXm = \left\{ \bx^1, \bx^2, -\bx^3\right\}\subset \mathbb{R}^2$, where $\bx^3 = \bx^1 + \bx^2$, and a multi-index $\balpha\in \R_+^3$, let
\beq\label{S}
\cS^{\balpha} := \cS^{\balpha} (B_{\balpha}(\bullet \vb \bXm)) := \left\{f\in L^2(\R^2) \;\bigg\lvert\; \exists a\in \ell^2(\Z^2), \;f(x) = \sum_{\bk\in \Z^2} a(\bk) B_{\balpha}(\bx - \bk \vb \bXm)\right\}.
\eeq
Note that for $|\balpha|  > 1$, the space $\cS^{\balpha}$ is a principal shift-invariant subspace of $L^2(\R^2)$. Our goal is to derive conditions on the knots such that the family $\{B_{\balpha}(\bullet - \bk \vb \bXm) \vb \bk\in \Z^2\}$ forms a Riesz basis for $\cS^{\balpha}$.

We proceed as follows. For $B_{\balpha}(\bullet \vb \bXm)$, we consider the $(2\pi,2\pi)$-periodic function $[\widehat{B_{\balpha}}](\bullet \vb \bXm)$ defined by
\beq\label{perB}
[\widehat{B_{\balpha}}](\bomega \large\vb \bXm) := \sum_{\bk\in\Z^2 } \lvert\widehat{B_{\balpha}}(\bomega + 2\pi \bk \vb \bXm)\rvert^2.
\eeq
We employ \cite[Proposition 5.7 (i)]{book:woj} to show that for $|\balpha| > 1$ the family $\{B_{\balpha}(\bullet - \bk \vb \bXm) \vb \bk\in \Z^2\}$ is a Riesz sequence in $L^2(\R^2)$, i.e., that there exist two positive constants $c$ and $C$ such that
\beq\label{rieszbounds}
0 < c \leq [\widehat{B_{\balpha}}](\bomega \vb \bXm) \leq C < \infty,
\eeq
for almost all $\bomega\in \R^2$.

For this purpose, we recall the definition of $\widehat{B_{\balpha}}(\bomega \vb \bXm)$, namely,
\[
\widehat{B_{\balpha}}(\bomega \vb \bXm) = \left(\frac{1 - e^{-i\langle \bomega, \bx^1\rangle}}{i \inn{\bomega}{\bx^1}}\right)^{\alpha_1}\left(\frac{1 - e^{-i\langle \bomega, \bx^2\rangle}}{i \inn{\bomega}{\bx^2}}\right)^{\alpha_2}  \left(\frac{e^{i\langle \bomega, \bx^3\rangle} - 1}{i\inn{\bomega}{\bx^3}}\right)^{\alpha_3},\quad\balpha\in \R_+^3.
\]
Therefore,
\beq\label{sincs}
 \lvert \widehat{B_{\balpha}}(\bomega + 2\pi \bk \vb \bXm)\rvert^2 = \prod_{j=1}^3\left|\sinc\frac{\inn{\bomega + 2\pi \bk}{\bx^j}}{2}\right|^{2\alpha_j}.
\eeq
Notice that the function $[\widehat{B_{\balpha}}](\bullet \vb \bXm)$ is symmetric about $\bomega = (\pi,\pi)$ so that, together with its $(2\pi,2\pi)$-periodicity, is suffices to establish \eqref{rieszbounds} for almost all $\bomega\in [0,\pi]^2$.

To find a positive lower bound, we consider the summand $\bk = (0,0)$ in \eqref{perB}. As the $\sinc$ function is symmetric about $\inn{\bomega}{\bx^j}=0$ it suffices to consider only positive arguments. Now,
\[
\frac{\inn{\bomega}{\bx^j}}{2} \leq \frac{\pi ( |\bx^j_1| + |\bx^j_2|)}{2}.
\]
If the direction $\bx^j$ is chosen in such a way that there exists a positive constant $\vartheta_j$ so that
\beq\label{xcon}
|x^j_1| + |x^j_2| \leq \vartheta_j < 2,
\eeq
then, as the $\sinc$ is decreasing over $0 < \inn{\bomega}{\bx^j} < \pi$,
\[
\left|\sinc\frac{\inn{\bomega}{\bx^j}}{2}\right| \geq \frac{2}{\pi\vartheta_j},\quad \forall\,\bomega\in [0,\pi]^2.
\]
Hence, we may choose $c:= \displaystyle{\prod_{j=1}^3\left(\frac{2}{\pi\vartheta_j}\right)^{2\alpha_j}} > 0$. Note that since $\bx^3 = (1,0)^T$, condition \eqref{xcon} is automatically fulfilled and one may choose $\vartheta_3 = 1$.

To obtain an upper bound, we proceed as follows. Suppose that in addition to \eqref{xcon}, the directions $\bx^j = (x^j_1,x^j_2)^T$ satisfy
\beq\label{tau}
\tau_j := \frac{x_2^j}{x_1^j} \notin \Q, \quad j = 1,2,
\eeq
where we assumed without loss of generality that $x_1^j \neq 0$. We remark on the case $j = 3$ below.

Note that since $\bomega$ is bounded and $\bx^j$ satisfies conditions \eqref{xcon} and \eqref{tau}, there exists a $\bk_0 := (k_{0,1},k_{0,2})\in \Z^2$ so that $|\inn{\bomega + 2\pi \bk}{\bx^j}| = |\inn{\bomega}{\bx^j} + 2\pi \inn{\bk}{\bx^j}|> 0$, for all $\bk = (k_1,k_2)$ with $|k_i|\geq |k_{0,i}|$, $i = 1, 2$, and all $j = 1,2$. Let us denote the collection of all such $\bk$ by $K$. For a $\bk\in K$, the function $\bomega \mapsto |\inn{\bomega + 2\pi \bk}{\bx^j}|$ attains then its unique minimum value, which is strictly positive, at one of the endpoints of the square $[0,\pi]^2$. Denote this minimum value by $\pi\inn{\bm}{\bx^j}$. The collection of all such $\bm = \bm(\bk)$ constitutes a subset $M$ of $\Z^2$.

Thus
\begin{align*}
[\widehat{B_{\balpha}}](\bomega \vb \bXm) &= \sum_{\bk\in\Z^2 } \prod_{j=1}^3\left|\sinc\frac{\inn{\bomega + 2\pi \bk}{\bx^j}}{2}\right|^{2\alpha_j}\\
& = \sum_{\bk\in\Z^2\setminus K } \prod_{j=1}^3\left|\sinc\frac{\inn{\bomega + 2\pi \bk}{\bx^j}}{2}\right|^{2\alpha_j} + \sum_{\bk\in K} \prod_{j=1}^3\left|\sinc\frac{\inn{\bomega + 2\pi \bk}{\bx^j}}{2}\right|^{2\alpha_j}\\
& \leq \card (\Z^2\setminus K) + \prod_{j=1}^3 \sum_{\bk\in K} \left|\sinc\frac{\inn{\bomega + 2\pi \bk}{\bx^j}}{2}\right|^{2\alpha_j}.
\end{align*}

Now,
\[
\left| \sinc \frac{\inn{\bomega + 2\pi \bk}{\bx^j}}{2}\right| \leq \begin{cases}
\displaystyle{\frac{2}{\pi |\inn{\bm}{\bx^j}|}}, & j =1,2;\\
\displaystyle{\frac{2}{\pi (2|k_1| - 1)}}, & j =3,
\end{cases}
\quad \forall \bomega\in [0,\pi]^2,\;\forall \bk\in K,
\]
and, therefore, for $j = 1,2$,
\[
\sum_{\bk\in K} \left|\sinc\frac{\inn{\bomega + 2\pi \bk}{\bx^j}}{2}\right|^{2\alpha_j} \leq\sum_{\bm\in M}\left(\frac{2}{\pi |\inn{\bm}{\bx^j}|}\right)^{2\alpha_j} \leq \left(\frac{2}{\pi}\right)^{2\alpha_j} \sum_{\bm\neq (0,0)}\frac{1}{|\inn{\bm}{\bx^j}|^{2\alpha_j}}.
\]
Rewriting the last sum, produces
\begin{align*}
\sum_{\bm\neq (0,0)}\frac{1}{|\inn{\bm}{\bx^j}|^{2\alpha_j}} &= \sum_{(m_1,m_2)\neq (0,0)}\frac{1}{|x_1^j|^{2\alpha_j}|m_1 + \tau_j m_2|^{2\alpha_j}} = \zeta (Q_j ; \alpha_j),\quad \alpha_j > 1.
\end{align*}
Here, $ \zeta (Q_j ; \alpha_j)$ denotes the Epstein zeta function for the positive definite quadratic form $Q_j (m_1,m_2) := (x_1^j)^2 |m_1 + \tau_j m_2|^2$. The Epstein zeta function converges for $\alpha_j > 1$ \cite{article:zw}.

In the case $j = 3$, we obtain instead
\begin{align*}
\sum_{\bk\in K} \left|\sinc\frac{\inn{\bomega + 2\pi \bk}{\bx^3}}{2}\right|^{2\alpha_3} &\leq \left(\frac{2}{\pi}\right)^{2\alpha_3} \sum_{k_1\in \Z\setminus\{0\}} \frac{1}{(2|k_1|-1)^{2\alpha_3}} \leq \left(\frac{2}{\pi}\right)^{2\alpha_3} \sum_{k_1\in \N} \frac{1}{k_1^{2\alpha_3}}\\
& = \zeta(2\alpha_3),
\end{align*}
where $\zeta(2\alpha_3)$ is the Riemann zeta function, which converges for $2\alpha_3 >1.$

Thus, we may select for the positive constant $C$ as an upper bound in \eqref{rieszbounds} the value
\[
C := \card (\Z^2\setminus K) + \left(\frac{2}{\pi}\right)^{2|\balpha|} \zeta (Q_1 ; \alpha_1) \zeta (Q_2 ; \alpha_2) \zeta (2\alpha_3) < \infty,
\]
with $\alpha_j > 1$, $j = 1,2$, and $\alpha_3 > \frac12$.

We summarize the above findings in the next theorem.

\begin{rieszbasis}\label{theorem:riesz}
Let $\bXm = \left\{ \bx^1, \bx^2, -\bx^3\right\}\subset \mathbb{R}^2$, where $\bx^3 = \bx^1 + \bx^2$, be a knot set in $\R^2$. Suppose that the knots $\bx^1$ and $\bx^2$ satisfy the following two conditions:
\begin{enumerate}
\item[\emph{(i)}] $|x^j_1| + |x^j_2| \leq \vartheta_j < 2$, $\quad j = 1,2$.
\item[\emph{(ii)}] $\displaystyle{\frac{x_2^j}{x_1^j}} \notin \Q, \quad j = 1,2$. (Assuming without loss of generality that $x_1^j \neq 0$.)
\end{enumerate}
Further assume that $|\balpha| > 1$. Then the family $\{B_{\balpha}(\bullet - \bk \vb \bXm) \vb \bk\in \Z^2\}$ of bivariate hex splines is a Riesz basis for
the principal shift-invariant subspace $\cS^{\balpha}\subset L^2(\R^2)$.
\end{rieszbasis}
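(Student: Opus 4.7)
The plan is to apply the classical Riesz-sequence criterion for integer translates, as cited in \cite[Proposition 5.7(i)]{book:woj}: it suffices to produce constants $0 < c \leq C < \infty$ with
\[
c \leq [\widehat{B_{\balpha}}](\bomega \vb \bXm) \leq C \quad\text{for a.e.\ }\bomega\in\R^2,
\]
where the periodization $[\widehat{B_{\balpha}}]$ is given by \eqref{perB}. By $(2\pi,2\pi)$-periodicity of the periodization and its symmetry about $(\pi,\pi)$, I only need to verify these bounds on $[0,\pi]^2$. Once the Riesz-sequence property is in hand, the Riesz basis property on the principal shift-invariant subspace $\cS^{\balpha}$ follows immediately from the definition \eqref{S}.

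For the lower bound, I would retain only the $\bk=(0,0)$ term in \eqref{perB}. By hypothesis (i), each argument $|\inn{\bomega}{\bx^j}|/2$ remains in $[0,\pi\vartheta_j/2]$ with $\pi\vartheta_j/2 < \pi$, so that $|\sinc(\inn{\bomega}{\bx^j}/2)| \geq \sinc(\pi\vartheta_j/2) > 0$ (since $\sinc$ is strictly positive and decreasing on $[0,\pi)$). Multiplying over $j=1,2,3$ and raising to the appropriate powers yields the explicit constant $c = \prod_{j=1}^3 (2/(\pi\vartheta_j))^{2\alpha_j}>0$, with $\vartheta_3 = 1$ automatic from $\bx^3=(1,0)^T$.

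The upper bound is the technical heart of the argument. Hypothesis (ii), the irrationality of $\tau_j = x_2^j/x_1^j$ for $j=1,2$, ensures that $\inn{\bomega+2\pi\bk}{\bx^j}$ vanishes for at most finitely many $\bk$ when $\bomega\in[0,\pi]^2$. I would isolate this finite exceptional set $\Z^2\setminus K$ (which contributes only a finite summand to the periodization) and on $K$ apply the standard estimate $|\sinc(x)| \leq 2/(\pi|x|)$. Tracking the index $\bm(\bk)\in\Z^2$ at which the function $\bomega \mapsto |\inn{\bomega+2\pi\bk}{\bx^j}|$ attains its minimum on $[0,\pi]^2$ gives $|\sinc(\inn{\bomega+2\pi\bk}{\bx^j}/2)| \leq 2/(\pi|\inn{\bm}{\bx^j}|)$ for $j=1,2$ and a corresponding $2/(\pi(2|k_1|-1))$ for $j=3$. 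The elementary inequality $\sum_\bk \prod_j a_{\bk,j} \leq \prod_j \sum_\bk a_{\bk,j}$ for nonnegative terms then decouples the triple sum into three independent factors.

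The main obstacle is proving convergence of these three decoupled sums. For $j=1,2$, the sum reduces to the Epstein zeta function $\zeta(Q_j;\alpha_j)$ of the positive definite quadratic form $Q_j(m_1,m_2) = (x_1^j)^2|m_1+\tau_j m_2|^2$, which converges precisely when $\alpha_j > 1$ \cite{article:zw}. For $j=3$, the sum collapses to a Riemann zeta $\zeta(2\alpha_3)$, convergent for $\alpha_3 > 1/2$. Combining these with the finite contribution from $\Z^2\setminus K$ produces the explicit upper constant $C = \card(\Z^2\setminus K) + (2/\pi)^{2|\balpha|}\zeta(Q_1;\alpha_1)\zeta(Q_2;\alpha_2)\zeta(2\alpha_3) < \infty$, which closes the Riesz-sequence characterization and yields the Riesz basis of $\cS^{\balpha}$.
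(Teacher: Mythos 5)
Your proposal follows essentially the same route as the paper's own argument: the Riesz-sequence criterion of \cite[Proposition 5.7(i)]{book:woj} applied to the periodization \eqref{perB}, the lower bound extracted from the $\bk=(0,0)$ term using condition (i) and the monotonicity of $\sinc$ on $[0,\pi)$, and the upper bound obtained by splitting off the finite exceptional set $\Z^2\setminus K$ guaranteed by condition (ii), decoupling the sum, and summing Epstein zeta functions $\zeta(Q_j;\alpha_j)$ for $j=1,2$ together with a Riemann zeta $\zeta(2\alpha_3)$ for $j=3$. The only caveat, which you share with the paper itself, is that the convergence of these zeta sums requires $\alpha_1,\alpha_2>1$ and $\alpha_3>\tfrac12$, which is stronger than the hypothesis $|\balpha|>1$ appearing in the theorem statement.
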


It was our hope to show that in connection with the refinement equation \eqref{twoscale} and under the hypotheses of Theorem \ref{theorem:riesz} that the spaces $V^{\balpha}_\ell$ defined by
\[
V^{\balpha}_\ell := \overline{\spn \left\{B_{\balpha}(2^\ell \bullet - M(\bk-\balpha_3) \vb \bXm) \vb \bk\in\Z^2\right\}}^{L^2(\R^2)}, \quad \ell \in \Z,
\]
form a dyadic multiresolution analysis of $L^2(\R^2)$. The properties (a) $\displaystyle{\bigcup_{\ell\in \Z} V_\ell^{\balpha}}$ is dense in $L^2(\R^2)$ and (b) $\displaystyle{\bigcap_{\ell\in \Z} V_\ell^{\balpha}} = \emptyset$ follow immediately from \cite[Theorems 2.3.2 and 2.3.4]{book:nov}. Arguments similar to those employed above lead to a lower Riesz bound $c > 0$, but it is not possible to attain a finite upper Riesz bound $C$.
\subsection{Extension to Complex Orders}
Definitions \ref{fracconedefn} and \ref{frachexdefn} can be extended to complex orders $z$. As the previous section shows, fractional cone and hex splines provide continuous (with respect to order of smoothness) families of functions. A complex order generates a family of complex-valued functions which also contain phase information as will be seen below.

We restate Definitions \ref{fracconedefn} and \ref{frachexdefn} for complex orders.

\begin{complexconedefn}
Let $\bX = \left\{ \bx^1, \ldots, \bx^n\right\} \subset \mathbb{R}^n \backslash \left\{ \mathbf{0}\right\}$, $n \geq s$, with $\spn(\bX) =\mathbb{R}^s$ and assume $\bz := \left( z_1,\ldots, z_n\right) \in \mathbb{C}_+^n := \left\{ \bz \in \mathbb{C}^n \vb \re z_k > 0\right\}$.  We define the \emph{cone spline of complex order $\bz$} in terms of its Fourier transform as the tempered distribution given by
\begin{equation}\label{compcone}
    \widehat{C_\bz}(\bomega \vb \bX) =  \prod_{k=1}^n \left( \frac{1}{i\langle \bomega, \bx^k\rangle} + \sigma_k \pi \delta\left(\langle \bomega, \bx^k\rangle\right)\right)^{z_k}
\end{equation}
where
\begin{equation*}
    \sigma_k = \left\{ \begin{array}{ll} 0,& z_k \notin \Z_+ \\ 1, & z_k \in \Z_+\end{array}\right. .
\end{equation*}
\end{complexconedefn}
Similarly, we define a hex spline of complex order.
\begin{complexhexdefn}
Let $\bX = \left\{ \bx^1, \bx^2, \bx^3 \right\}$ be a $3$-directional mesh in $\mathbb{R}^2$ with $\bx^1, \bx^2$ linearly independent and $\bx^3 = \bx^1 + \bx^2 = (1,0)^T$ and assume $\bz \in \mathbb{C}_+^3$.  We define the \emph{hex spline of complex order $\bz$} in terms of its Fourier transform as follows:
\begin{align}\label{comphex}
    \widehat{B_\bz}(\bomega \vb \bXm) &:=  \left( 1 - e^{-i\langle \bomega, \bx^1\rangle}\right)^{z_1}\left( 1 - e^{-i\langle \bomega, \bx^2\rangle}\right)^{z_2}  \left( e^{i\langle \bomega, \bx^3\rangle} - 1\right)^{z_3} \widehat{C_\bz}(\bomega \vb \bX)\nonumber\\
    & = \left(\frac{1 - e^{-i\langle \bomega, \bx^1\rangle}}{i \inn{\bomega}{\bx^1}}\right)^{z_1}\left(\frac{1 - e^{-i\langle \bomega, \bx^2\rangle}}{i \inn{\bomega}{\bx^2}}\right)^{z_2}  \left(\frac{e^{i\langle \bomega, \bx^3\rangle} - 1}{i\inn{\bomega}{\bx^3}}\right)^{z_3},
\end{align}
where $\bXm = \left\{ \bx^1, \bx^2, -\bx^3\right\}$.
\end{complexhexdefn}
Recall the fact that for complex numbers $z$ and $w$, $w^z := e^{z \log w} = e^{z (\log |w| + i \arg w)}$ is uniquely defined for $\arg w \in [-\pi, \pi)$. Hence, we consider only the principal branch of the complex exponential function. As usual, we set $0^z := 0$ and $w^0 = 1$.

Define
\[
\Theta_k := \Theta (\bomega \vb \bx^k) := \frac{1}{i\langle \bomega, \bx^k\rangle} + \sigma_k \pi \delta\left(\langle \bomega, \bx^k\rangle\right)
\]
and
\[
\Omega_k := \Omega (\bomega \vb \bx^k) := \frac{1 - e^{-i\langle \bomega, \bx^k\rangle}}{i \inn{\bomega}{\bx^k}},\quad\text{(obvious modification for $k = 3$)}.
\]
Note that $\Omega_k$ is well-defined as it never crosses the negative real axis. For $\im \Omega_k = \frac{\cos\inn{\bomega}{\bx^k}-1}{\inn{\bomega}{\bx^k}} = 0$ iff $\inn{\bomega}{\bx^k}\in 2\pi \Z$ and $\re\Omega_k = \frac{\sin\inn{\bomega}{\bx^k}}{\inn{\bomega}{\bx^k}}$ equals 1, for $\inn{\bomega}{\bx^k} = 0$, and 0 for $\inn{\bomega}{\bx^k} \in 2\pi\Z\setminus\{0\}$.

With these observations, Equations \eqref{compcone} and \eqref{comphex} can be rewritten in the following form.
\beq\label{44}
\widehat{C_\bz}(\bomega \vb \bX) = \widehat{C_{\re \bz}}(\bomega \vb \bX)\,\exp\left(-\sum_k \im z_k \cdot \arg \Theta_k\right)\,\exp\left(i\,\sum_k \im z_k\cdot \log |\Theta_k|\right)
\eeq
and
\beq\label{45}
\widehat{B_\bz}(\bomega \vb \bXm) = \widehat{B_{\re \bz}}(\bomega \vb \bXm)\,\exp\left(-\sum_k \im z_k \cdot \arg \Omega_k\right)\,\exp\left(i\,\sum_k \im z_k\cdot \log |\Omega_k|\right),
\eeq
where we set $\re \bz := (\re z_1, \ldots, \re z_n)$.

Equations \eqref{44} and \eqref{45} show that both the cone spline and hex spline consists of a cone spline, respectively, hex spline of fractional order $\re z$, multiplied by a modulation factor $\exp\left(-\displaystyle{\sum_k}\, \im z_k \cdot \arg \Theta_k\right)$ and a phase factor $\exp\left(i\,\displaystyle{\sum_k}\, \im z_k\cdot \log |\Omega_k|\right)$.  A complex cone spline is plotted in Figure \ref{fig:complexcones2}.

\begin{figure}[ht]
    \begin{center}
        \resizebox{\textwidth}{!}{\includegraphics{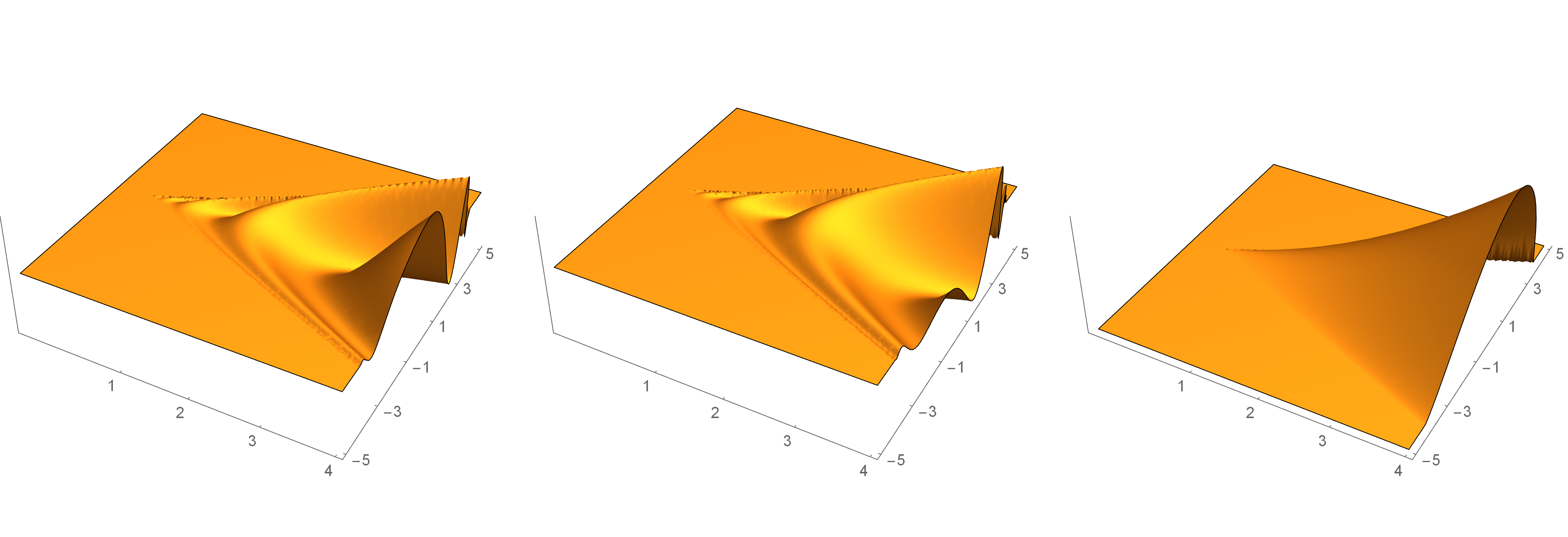}}
    \end{center}
    \caption{The complex cone spline $C_\bz(\bomega \vb \bX)$ with knot set $\bX = \left\{ (1,1)^T, (1,-1)^T\right\}$ and $\mathbf{z} = \left( 2.25 + 2.75i, 1.5 + 3.5i\right)$.  Plotted left to right are $\re C_\bz(\bomega \vb \bX)$, $\im C_\bz(\bomega \vb \bX)$, and $\vb C_\bz(\bomega \vb \bX) \vb$.}
    \label{fig:complexcones2}
\end{figure}

The existence of these two factors may allow the extraction of additional information from sampled data and the manipulation of images. In fact, the spectrum of a complex hex spline consists of the spectrum of a fractional hex spline combined with a modulating and a damping factor. The presence of an imaginary part causes the frequency components along the negative and positive direction of $\bx^k$ to be enhanced with different signs. This has the effect of shifting the frequency spectrum towards the negative or positive frequency side along $\bx^k$, depending on the sign of the imaginary part.

Moreover, in contrast to, for instance, complex wavelet bases, the phase information $\exp\left(i\,\displaystyle{\sum_k}\, \im z_k\cdot \log |\Omega_k|\right)$ is already built in, and an adjustable smoothness parameter, namely $\re\bz$, provides a continuous family of functions. The importance of complex-valued transforms in image analysis is discussed in \cite{article:forster}.

The potential applicability of hex splines to image analysis based on the above observations will be investigated in a forthcoming paper and published elsewhere.

The results in the previous subsections regarding the time domain representation of cone and hex splines, the decay rate and refinement property as well as the Riesz basis property easily extend to complex orders; $\balpha$ needs to be replaced by $\re \bz$ and $\alpha_k$ by $\re z_k$.

    \section*{Acknowledgement}
        The first author is partially supported by DFG grant MA5801/2-1.

    \bibliography{fractionalsplines}

    \vspace{.1in}

    \noindent\textbf{Peter Massopust\\
    Zentrum Mathematik, M6\\
    Technische Universit\"at M\"unchen\\
    Boltzmannstr. 3\\
    85747 Garching, Germany\\
    massopust@ma.tum.de}

    \vspace{.1in}\noindent 
    \textbf{Patrick Van Fleet\\
    Department of Mathematics\\
    University of St. Thomas\\
    2115 Summit Avenue\\
    Saint Paul, MN 55105, U.S.A.\\
    pjvanfleet@stthomas.edu
    }
\end{document}